\documentclass[10pt]{article}
\usepackage{amsmath,amssymb,amscd,color}
\usepackage[unicode,breaklinks=true,colorlinks=true]{hyperref}
\voffset=0.0truein \hoffset=-0.75truein
\setlength{\textwidth}{6.2in} \setlength{\textheight}{8.8in}
\setlength{\topmargin}{-0.2in} \hfuzz5pt

\newcommand{\bq}{\begin{equation}}
\newcommand{\eq}{\end{equation}}

\newcommand{\R}{{ \mathbb{R}  }}

\newcommand{\bbr}{{ \mathbb{R}  }}

\newcommand{\bke}[1]{\left( #1 \right)}

\newcommand{\bket}[1]{\left\{ #1 \right\}}
\newcommand{\norm}[1]{\left\Vert #1 \right\Vert}

\begin{document}
\bibliographystyle{plain}
%\english

% \newtheorem{thm}{Theorem}[section]
% \newtheorem{cor}[thm]{Corollary}
% \newtheorem{lem}[thm]{Lemma}
% \newtheorem{prop}[thm]{Proposition}
% \theoremstyle{definition}
% \newtheorem{defn}[thm]{Definition}
% \theoremstyle{remark}
% \newtheorem{rem}[thm]{Remark}

\newtheorem{defn}{Definition}
\newtheorem{lemma}[defn]{Lemma}
\newtheorem{proposition}{Proposition}
\newtheorem{theorem}[defn]{Theorem}
\newtheorem{cor}{Corollary}
\newtheorem{remark}{Remark}
\numberwithin{equation}{section}

\def\Xint#1{\mathchoice
   {\XXint\displaystyle\textstyle{#1}}%
   {\XXint\textstyle\scriptstyle{#1}}%
   {\XXint\scriptstyle\scriptscriptstyle{#1}}%
   {\XXint\scriptscriptstyle\scriptscriptstyle{#1}}%
   \!\int}
\def\XXint#1#2#3{{\setbox0=\hbox{$#1{#2#3}{\int}$}
     \vcenter{\hbox{$#2#3$}}\kern-.5\wd0}}
\def\ddashint{\Xint=}
\def\dashint{\Xint-}
\def\aint{\Xint\diagup}

\newenvironment{proof}{{\bf Proof.}}{\hfill\fbox{}\par\vspace{.2cm}}
\newenvironment{pfthm1}{{\par\noindent\bf
            Proof of Theorem \ref{Theorem1} }}{\hfill\fbox{}\par\vspace{.2cm}}
\newenvironment{pfprop1}{{\par\noindent\bf
            Proof of Proposition  \ref{time-decay} }}{\hfill\fbox{}\par\vspace{.2cm}}
\newenvironment{pfthm3}{{\par\noindent\bf
            Proof of Theorem  \ref{Theorem2} }}{\hfill\fbox{}\par\vspace{.2cm}}
\newenvironment{pfthm-1}{{\par\noindent\bf
            Proof of Theorem  \ref{Prop-1} }}{\hfill\fbox{}\par\vspace{.2cm}}
\newenvironment{pfthm-2}{{\par\noindent\bf
            Proof of Theorem  \ref{Theorem-weaksolution} }}{\hfill\fbox{}\par\vspace{.2cm}}

\newenvironment{pfthm4}{{\par\noindent\bf
Sketch of proof of Theorem \ref{Theorem6}.
}}{\hfill\fbox{}\par\vspace{.2cm}}
\newenvironment{pfthm5}{{\par\noindent\bf
Proof of Theorem 5. }}{\hfill\fbox{}\par\vspace{.2cm}}
\newenvironment{pflemsregular}{{\par\noindent\bf
            Proof of Lemma \ref{sregular}. }}{\hfill\fbox{}\par\vspace{.2cm}}

\title{Local well-posedness in the Wasserstein space for a chemotaxis model coupled to Navier-Stokes equations%incompressible fluid flows
%\footnote{KK:How about the following title? \color{blue}  Existence of weak solutions of a Fokker-Plank equation in the Wasserstein space and its application to Keller-Segel-fluid equations}
}
\author{Kyungkeun Kang and Haw Kil Kim}

\date{}

\maketitle
\begin{abstract}
We consider a coupled system of Keller-Segel type equations and the incompressible Navier-Stokes equations in spatial dimension two and
three. In the previous work \cite{KK}, we established the existence of a weak solution of a
Fokker-Plank equation in the Wasserstein space using the optimal transportation technique. Exploiting this result, we
constructed solutions of Keller-Segel-Navier-Stokes equations such that the density of biological organism belongs to the absolutely continuous
curves in the Wasserstein space.
In this work, we refine the result on the existence of a weak solution of a Fokker-Plank equation in the Wasserstein space. As a result, we
construct solutions of Keller-Segel-Navier-Stokes equations under weaker assumptions on the initial data.

%\footnote{KK: The following is alternative abstrace: \color{magenta} We consider a Fokker-Plank equation in the Wasserstein space. Using the splitting method based on an optimal transport technique, we refine the previous result in [16] on the existence of weak solutions of a Fokker-Plank equation in the Wasserstein space. As a result, we construct weak solutions of Keller-Segel-Navier-Stokes equations under weaker assumptions on the initial data than those in [16], and, in addition, the density of biological organism belongs to the absolutely continuous curves in the Wasserstein space.}
\end{abstract}

{2010 AMS Subject Classification}\,:\, 35K55, 75D05, 35Q84, 92B05

{Keywords}\,:\, chemotaxis, Navier-Stokes equations, Fokker-Plank
equations, Wasserstein space

\section{Introduction}
 \setcounter{equation}{0}

In this paper, we study an aerotaxis model formulating the dynamics
of oxygen, swimming bacteria, and viscous incompressible fluids in
$Q_T:=\bbr^d \times [0, T)$, $d=2,3$, $T>0$.
\begin{equation}\label{KSNS-May6-10}
\partial_t \rho + u \cdot \nabla  \rho - \Delta \rho= -\nabla\cdot (\chi (c) \rho \nabla c),
\end{equation}
\begin{equation}\label{KSNS-May6-20}
\partial_t c + u \cdot \nabla c-\Delta c =-k(c) \rho,
\end{equation}
\begin{equation}\label{KSNS-May6-30}
\partial_t u + u\cdot \nabla u -\Delta u +\nabla p=-\rho \nabla
\phi,\quad {\rm{div}\,} u=0.
\end{equation}

Here $\rho(t,\,x) : Q_{T} \rightarrow \R^{+}$, $c(t,\,x) : Q_{T}
\rightarrow \R^{+}$, $u(t,\, x) : Q_{T} \rightarrow \R^{d}$ and
$p(t,x) :  Q_{T} \rightarrow \R$ denote the biological cell
concentration, oxygen concentration, fluid velocity, and scalar
pressure, respectively, where $\R^+$ indicates the set of
non-negative real numbers. The oxygen consumption rate $k(c)$ and
the aerobic sensitivity $\chi (c)$ are nonnegative as functions of
$c$, namely $k, \chi:\R^+\rightarrow\R^+$ such that $k(c)=k(c(x,t))$
and $\chi(c)=\chi(c(x,t))$ and the time-independent function $\phi
=\phi (x)$ denotes the potential function, e.g., the gravitational
force or centrifugal force. Initial data are given by $(\rho_0(x),
c_0(x), u_0(x))$ with $\rho_0(x),\, c_0(x) \geq 0$ and $\nabla \cdot
u_0=0$. Tuval et al. proposed in \cite{TCDWKG}, describing behaviors
of swimming bacteria, {\it Bacillus subtilis} (see also
\cite{CFKLM}).

The above system \eqref{KSNS-May6-10}-\eqref{KSNS-May6-30} seems to
have similarities to the classical Keller-Segel model suggested by
Patlak\cite{Patlak} and Keller-Segel\cite{KS1, KS2}, which is given
as
\begin{equation}\label{KS-nD} \,\,\left\{
 \begin{array}{c}
 \rho_t=\Delta \rho-\nabla \cdot(\chi \rho\nabla c),\\
 \vspace{-3mm}\\
  c_t=\Delta c-\alpha c+\beta \rho,\\
 \end{array}
 \right.
\end{equation}
where $\rho=\rho(t,x)$ is the biological cell density and $c=c(t,x)$ is the
concentration of chemical attractant substance. Here, $\chi$ is the
chemotatic sensitivity, and $\alpha\ge 0$ and $\beta\ge 0$ are the
decay and production rate of the chemical, respectively. The system
\eqref{KS-nD} has been comprehensively studied and results are not
listed here (see e.g. \cite{Her-Vela, JL, NSY, OY, Win} and the
survey papers \cite{Horstmann1, Horstmann2}).

We remark that in the case that the effect of fluids is absent,
i.e., $u=0$ and $\phi=0$, the system
\eqref{KSNS-May6-10}-\eqref{KSNS-May6-30} becomes a Keller-Segel
type model with the negative term $-k(c) \rho$. It is due to the
fact that the oxygen concentration is consumed, while the chemical
substance is produced by $\rho$ in the Keller-Segel system
\eqref{KS-nD}.

Our main objective is to establish the existence of solution $\rho$
for the system \eqref{KSNS-May6-10}-\eqref{KSNS-May6-30} in the
Wasserstein space, which will be described later in detail.

We review some known results related to well-posedness of solutions
for the system \eqref{KSNS-May6-10}-\eqref{KSNS-May6-30}. Existence
of local-in-time solutions was proven for bounded domains in $\R^3$.
It was shown in \cite{DLM} that smooth solutions exist globally in
time, provided that initial data are very near constant steady
states and $\chi(\cdot)$, $k(\cdot)$ hold the following conditions:
\begin{equation}\label{Assumption1}
\chi'(\cdot)\ge 0,\quad k'(\cdot) >0,\quad
\left(\frac{k(\cdot)}{\chi(\cdot)} \right)^{''} <0.
\end{equation}
Global well-posedness of regular solutions was proved in \cite{Win2}
for large initial data for bounded domains in $\R^2$ with boundary
conditions $\partial_{\nu}\rho=\partial_{\nu} c=u=0$ under a similar
conditions as \eqref{Assumption1} on $\chi(\cdot)$ and $k(\cdot)$:
\begin{equation}\label{Kang-Kim-May6-90}
\left( \frac{k(\cdot)}{\chi(\cdot)} \right)^{'} >0, \quad
(\chi(\cdot) k(\cdot))' \ge 0, \quad \left(
\frac{k(\cdot)}{\chi(\cdot)} \right) ^{''} \le 0.
\end{equation}
Different structure conditions are given in \cite{AKY}, where \eqref{Kang-Kim-May6-90} is slightly relaxed, in case that $\chi$ is a constant.

In \cite[Theorem 1.1]{ckl} the first author et al. constructed
unique regular solutions for general $\chi$ and $\kappa$ in
following function classes:
\begin{equation}\label{Kang-Kim-May6-100}
(\rho, c, u) \in C\left([0,T^{\ast}); H^{m}(\mathbb{R}^{3})\right)
\times C\left([0,T^{\ast}); H^{m+1}(\mathbb{R}^{3})\right)\times
C\left([0,T^{\ast}); H^{m+1}(\mathbb{R}^{3})\right),
\end{equation}
in case that $\phi\in H^{m}$ and initial data belong to
\begin{equation}\label{Kang-Kim-May6-110}
(\rho_{0}, c_{0}, u_{0})\in H^{m}(\mathbb{R}^{3}) \times
H^{m+1}(\mathbb{R}^{3}) \times H^{m+1}(\mathbb{R}^{3}).
\end{equation}
In addition, if $\chi(\cdot)$ and $k(\cdot)$ satisfy the following
conditions, motivated by experimental results in \cite{CFKLM} and
\cite{TCDWKG} (compare to \eqref{Assumption1} or
\eqref{Kang-Kim-May6-90}): There is an $\epsilon>0$ such that
\begin{equation} \label{CKL10-march14}
\chi(c),\, k(c),\, \chi'(c),\, k'(c)\geq 0,\,\mbox {and }\, \sup |
\chi (c)- \mu k(c)| < \epsilon\,\,\mbox{ for some }\,\mu>0,
\end{equation}
global-in-time regular solutions exist in $\R^2$ with no smallness
of the initial data (see \cite[Theorem 1.3]{ckl}).
We remark that was shown  in \cite{ckl-nonlinearity} that if $\norm{\rho_0}_{L^1(\R^2)}$ is sufficiently small, global well-posedness and temporal decays
can be established,  in only case that $\chi(c),\, k(c),\, \chi'(c),\, k'(c)\geq 0$.
One can also consult \cite{ckl-cpde}, \cite{Win3}
and \cite{ckl-jkms} with reference therein for temporal
decay and asymptotics, and also refer to e.g.
\cite{CKK}, \cite{FLM}, \cite{Tao-W} and \cite{CK-jmp}  for the
nonlinear diffusion models of a porous medium type.

Our main ingredient is to establish existence of weak solutions for the Fokker-Plank equations in the Wasserstein space.
Compared to previous result, in \cite{KK}, the authors assumed that $\rho_0$ is in
$L^{1}(\R^d)\cap L^{\infty}(\R^d)$ and constructed
cell concentration $\rho$ in the Wasserstein space. More precisely, by understanding the following Fokker-Planck equation as an absolutly continuous curve
in the Wasserstein space,
\begin{equation} \label{KK-May7-10}
\left\{\begin{matrix}
   & \partial_t \rho =   \nabla \cdot( \nabla \rho  - v\rho)\\
   & \rho(\cdot,0)=\rho_0
    \end{matrix}
    \right.\quad\mbox{ in }\,\, Q_{T} := [0,\, T] \times \R^{d},
\end{equation}
we solved \eqref{KK-May7-10} under the assumption (refer Theorem 1.1 in \cite{KK})
\begin{equation}\label{precondition-v}
v\in L^2(0,T;L^\infty(\bbr^d)) \qquad \mbox{and} \qquad  \mbox{div}~ v \in L^1(0,T;L^\infty(\bbr^d)).
\end{equation}
As a result, by exploiting the above result with $v:=u-\chi(c)\nabla c$, we constructed the solution of Keller-Segel-Navier-Stokes system
\eqref{KSNS-May6-10}-\eqref{KSNS-May6-30} under the assumption (refer Theorem 1.2 in \cite{KK})
\begin{equation}
\rho_0 \in L^1(\bbr^d) \cap L^\infty(\bbr^d),\quad c_0 \in W^{3,m}(\bbr^d)\cap W^{2,2}(\bbr^d),\quad u_0 \in W^{2,b}(\bbr^d),
\end{equation}
for any $m>d $ and $b>d+2$.

In this paper, by exploiting approximation argument to \eqref{KK-May7-10} and being able to control the uniform speed of
approximated absolutely continuous curves  in the Wasserstein space, we solve the Fokker-Plank equation \eqref{KK-May7-10}
under the weaker regularity assumption on the velocity field
\begin{equation}
v\in L^{\beta}(0,T; L^{\alpha}(\bbr^d)), \qquad \frac{d}{\alpha} +\frac{2}{\beta} \leq 1,\,\,\alpha >d.
\end{equation}
This is certainly weaker than \eqref{precondition-v} and it turns out that initial condition $\rho_0$ can be in a function space larger  than
$L^1\cap L^{\infty}(\R^d)$.

More precise statement of the above result is stated in Theorem \ref{Prop-1}.

\begin{theorem}\label{Prop-1}
Let $\frac{d}{p}+\frac{2}{q}=1$ and $p>d$.  Suppose
\begin{equation}\label{KK-May7-30}
\rho_0 dx\in\mathcal{P}_2(\mathbb{R}^d),\qquad \int_{\bbr^d}\rho_0\ln \rho_0 dx < \infty.
\end{equation}
Assume further that
\begin{equation}\label{KK-May19-10}
v \in  L^q(0,T; L^p(\bbr^d)).
\end{equation}
%where  $\alpha >d  $ and $\beta \geq 2$ are numbers such that
%\begin{equation}\label{KK-June02-20}
%\frac{d}{\alpha} +\frac{2}{\beta} \leq 1 \qquad \mbox{and} \qquad  \left \{
%  \begin{array}{l}
%  \alpha \geq 2p' \quad \mbox{if} \quad 1< p<\frac{d}{d-2}\\
%  \alpha > d \quad \mbox{if} \quad p \geq \frac{d}{d-2}
%\end{array}
%  \right.
%\end{equation}

 Then, there exists an absolutely continuous curve $\mu\in
AC_2(0,T; \mathcal{P}_2(\mathbb{R}^d))$ such that $\mu(t):=\rho(t,x) dx \in \mathcal{P}_2^{ac}(\bbr^d)$ for all $t\in [0,T]$, and $\rho$
solves \eqref{KK-May7-10} in the sense of
distributions, namely, for any $\varphi\in
C_c^\infty([0,T)\times \bbr^d)$
\begin{equation}\label{KK-May7-40}
\begin{aligned}
\int_{0}^{T} \int_{\bbr^d}  \left\{\partial_t\varphi(t,x)
 + \Delta \varphi(t,x)+ \nabla \varphi(t,x) \cdot v(t,x)\right\}\rho(t,x)dxdt =  - \int_{\bbr^d}\varphi(0,x) \rho_0(x)dx,
\end{aligned}
\end{equation}
and
\begin{equation}\label{KK-May7-60}
W_2(\rho(t),\rho(s))\leq C \sqrt{t-s}\qquad \forall ~~0\leq s\leq
t\leq T,
\end{equation}
where the constant $C=C\left (T, ~~ \int_{\bbr^d}\rho_0 \ln \rho_0 dx, ~~ \int_{\bbr^d}|x|^2 \rho_0 dx, ~~ \int_0^T  \|v_t
\|_{L^p}^{\frac{2p}{p-d}} dt \right )$
%in \eqref{KK-May7-60} depends on
%%\begin{equation}
%%||v||_{L^{\beta}(0,T; L^{\alpha}(\bbr^d))},\quad  \int_{\bbr^d}|x|^2 \rho_0 dx, \quad \|\rho_0\|_{L^p(\bbr^d)}.
%%\end{equation}
%\begin{equation}\label{KK-May7-110}
%T, ~~ \int_{\bbr^d}\rho_0 \ln \rho_0 dx, ~~ \int_{\bbr^d}|x|^2 \rho_0 dx, ~~ \int_0^T  \|v_t \|_{L^p}^{\frac{2p}{p-d}} dt.
%\end{equation}

Furthermore, if $\rho_0 \in L^\alpha(\bbr^d) $ for $\alpha >1$ then we have
\begin{equation}\label{KK-May7-50}
||\rho(t)||_{L^\alpha(\bbr^d)} \leq \| \rho_0\|_{L^\alpha}
~\exp\Big(C\|v \|_{L^q(0,T;L^p(\bbr^d))}^{q} \Big)
\quad \forall ~ t\in[0,T].
\end{equation}
%\footnote{KK: we need the estimate \eqref{KK-May7-50} in the proof of Theorem 2}

\end{theorem}

\begin{remark}
 The limiting case, $(p, q)=(d, \infty)$, in
\eqref{KK-May19-10} can be included, and it requres, however, an
extra smallness of its norm, i.e. there is an $\epsilon>0$ such that
$\norm{v}_{L^{\infty}(0,T; L^p(\bbr^d))}<\epsilon$. Thus, we do
not consider such case in Theorem \ref{Prop-1}.
\end{remark}

With the aid of Theorem \ref{Prop-1},  we construct weak
solutions for the aerotaxis-fluid model
\eqref{KSNS-May6-10}-\eqref{KSNS-May6-30} in the Wasserstein space. For
convenience, we introduce some function classes, which are defined as
\begin{equation}\label{KK-May7-120}
X_a((0,t)\times \R^d):=L^{\infty}(0,t; L^a(\R^d)),
\end{equation}
\begin{equation}\label{KK-May7-130}
Y_a((0,t)\times \R^d):=L^{2}(0,t; W^{2,a}(\R^d))\cap W^{1,2}(0,t;
L^{a}(\R^d)),
\end{equation}
where the function spaces $X_a$ and $Y_a$ are equipped
with the following norms:
\begin{equation}\label{KK-May23-10}
\norm{f}_{X_a}:=\norm{f}_{L^{\infty}((0,T); L^a(\R^d))},
\end{equation}
\begin{equation}\label{KK-May23-20}
\norm{f}_{Y_a}:=\norm{f}_{L^{2}((0,T); W^{2,
a}(\R^d))}+\norm{f}_{W^{1,2}((0,T); L^{a}(\R^d))}.
\end{equation}

Next, we establish the existence of solutions in the classes $(\rho, c, u)\in X_a\times Y_a \times Y_a$ for the system
\eqref{KSNS-May6-10}-\eqref{KSNS-May6-30}. Our result reads as follows:

\begin{theorem}\label{Theorem-weaksolution}
Let $d=2,3$, and $d/2<a<\infty$. Suppose that $\chi, k, \chi', k'$
are all non-negative and $\chi$, $k\in C^j(\R^+)$ and $k(0)=0$, $\|
\nabla \phi \|_{L^1\cap L^{\infty}}<\infty$ for $1<j<\infty$. Let
the initial data $(\rho_0, c_0, u_0)$ be given as
\begin{equation}\label{kk-April12-10}
\rho_0\in (L^1\cap L^{a})(\R^d), \qquad c_0, u_0\in W^{2,a}(\R^d).
\end{equation}
Then there exists $T>0$ and a unique weak solution $(\rho, c, u)$ of
\eqref{KSNS-May6-10}-\eqref{KSNS-May6-30} such that
\begin{equation}\label{kk-April12-20}
\rho\in X_a(Q_T) ,\qquad c, u\in Y_a(Q_T),
\end{equation}
where $Q_T=[0,T]\times \R^d$.
Furthermore, we have
\begin{equation}\label{KK-May27-300}
W_2\Big(\frac{\rho(t)}{\|\rho_0\|_{L^1(\bbr^d)}},\frac{\rho(s)}{\|\rho_0\|_{L^1(\bbr^d)}}\Big)\leq C \sqrt{t-s}\qquad \forall ~~0\leq s\leq
t\leq T.
\end{equation}
Here the constant $C$ in \eqref{KK-May27-300} depends on
\begin{equation}\label{KK-May27-400}
||(|u|+|\nabla c|)||_{ L^{\beta}(0,T; L^{\alpha}(\bbr^d))},
\quad  \int_{\bbr^d}|x|^2 \rho_0
dx, \quad \| \rho_0 \|_{L^a(\bbr^d)},
\end{equation}
where $\alpha>d$ and $\beta \geq 2$ are numbers that come from Theorem 1 when $p=a$.
\end{theorem}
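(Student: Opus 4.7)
The plan is to construct $(\rho,c,u)$ by a Picard-type iteration built around Theorem \ref{Prop-1}. Starting from $(c^0,u^0)=(c_0,u_0)$, at step $n\mapsto n+1$ I would (i) set $v^n:=u^n-\chi(c^n)\nabla c^n$ and apply Theorem \ref{Prop-1} with $p=a$ to produce a solution $\rho^{n+1}$ of the linear Fokker--Planck equation with drift $v^n$, inheriting the $L^q$-bound \eqref{KK-May7-50} for every $1\leq q\leq a$ and the $\sqrt{t-s}$ Wasserstein continuity \eqref{KK-May7-60}; (ii) solve the linear parabolic problem $\partial_t c^{n+1}+u^n\cdot\nabla c^{n+1}-\Delta c^{n+1}=-k(c^n)\rho^{n+1}$ with $c^{n+1}(0)=c_0$ via $L^2$-$L^a$ maximal regularity; (iii) solve the perturbed Stokes system $\partial_t u^{n+1}-\Delta u^{n+1}+\nabla p^{n+1}=-u^n\cdot\nabla u^{n+1}-\rho^{n+1}\nabla\phi$, $\nabla\cdot u^{n+1}=0$, with $u^{n+1}(0)=u_0$, analogously. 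The right-hand sides of (ii) and (iii) lie in $L^2(L^a)$ once $\rho^{n+1}\in L^\infty(L^a)$ and $u^n\in L^\infty(L^\infty)$ (from Sobolev $W^{2,a}\hookrightarrow L^\infty$ since $a>d/2$ — with an additional interpolation when $d=3$, $a\in(3/2,3)$), so steps (ii) and (iii) deliver $c^{n+1},u^{n+1}\in Y_a(Q_T)$.

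The decisive technical point is verifying at each step that $v^n$ meets the integrability hypothesis \eqref{KK-May19-10}--\eqref{KK-June02-20} of Theorem \ref{Prop-1}. For $a>d$ this is immediate: $Y_a\hookrightarrow L^2(0,T;W^{1,\infty})\cap L^\infty(0,T;L^a)$, so interpolation yields $u^n,\nabla c^n\in L^\beta(L^\alpha)$ with $\alpha>d$ and $\beta=2$ satisfying $d/\alpha+2/\beta\leq 1$. For the delicate range $d/2<a\leq d$ I would bootstrap: since $\rho^{n+1}\in L^\infty(L^a)$, the sources of the $c$- and $u$-equations actually belong to $L^p(L^a)$ for every $p<\infty$, so maximal $L^p$-$L^a$ regularity upgrades $c^{n+1},u^{n+1}$ to $L^p(W^{2,a})\cap W^{1,p}(L^a)$; combined with the Sobolev embedding $W^{2,a}\hookrightarrow W^{1,s}$ for $s=da/(d-a)>d$, this yields $\nabla c^{n+1},\,u^{n+1}\in L^p(L^s)$ with $p$ as large as desired, realizing $d/s+2/p\leq 1$. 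The growth/Lipschitz properties of $\chi$ needed in the product $\chi(c^n)\nabla c^n$ come from $\chi\in C^j$ with $j>1$ and uniform $L^\infty$ bounds on $c^n$. I expect this bootstrap to be the main obstacle, since it forces one to work on a finer scale than $Y_a$ suggests at face value.

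Once uniform bounds for $(\rho^n,c^n,u^n)$ in $X_a\times Y_a\times Y_a$ are secured on a short interval $[0,T]$ via \eqref{KK-May7-50} and the linear estimates of steps (ii)--(iii), I would close the argument with a contraction estimate on the differences $(\rho^{n+1}-\rho^n,c^{n+1}-c^n,u^{n+1}-u^n)$ in a slightly weaker norm (for instance $X_1\cap X_a$ for $\rho$ and $X_a$ for $c,u$), combining the standard linear heat/Stokes estimates for (ii)--(iii) with a stability estimate for the linear Fokker--Planck solution with respect to perturbations of the drift $v^n$, which is available as a byproduct of the proof of Theorem \ref{Prop-1}. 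The resulting fixed point $(\rho,c,u)\in X_a\times Y_a\times Y_a$ satisfies the weak formulation of \eqref{KSNS-May6-10}--\eqref{KSNS-May6-30} and is unique by the same contraction. Finally, mass conservation $\|\rho(t)\|_{L^1}=\|\rho_0\|_{L^1}$, obtained by testing \eqref{KK-May7-40} against a suitable approximation of $1$, ensures that $\rho(t)/\|\rho_0\|_{L^1}$ is a probability density for every $t$, so the Wasserstein continuity \eqref{KK-May27-300} is inherited directly from \eqref{KK-May7-60}, with the stated dependence on the quantities in \eqref{KK-May27-400}.
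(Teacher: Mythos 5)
Your overall architecture is the same as the paper's: iterate, produce the $\rho$-iterate from Theorem \ref{Prop-1}, solve the $c$- and $u$-equations by maximal $L^2_tL^a_x$ regularity, close with a small-time contraction, and read off \eqref{KK-May27-300} from \eqref{KK-May7-60}. But two of your intermediate claims do not hold as stated. First, for $d/2<a\le d$ an iterate $u^n\in Y_a$ is \emph{not} in $L^\infty(0,T;L^\infty)$: membership in $Y_a=L^2_tW^{2,a}_x\cap W^{1,2}_tL^a_x$ only controls $u^n$ uniformly in time in (roughly) $W^{1,a}_x$, and $W^{1,a}\not\hookrightarrow L^\infty$ when $a\le d$; the embedding $W^{2,a}\hookrightarrow L^\infty$ is available only in the $L^2_t$ sense. (Also, in the ``easy'' case $a>d$ your pair $\beta=2$ with finite $\alpha$ can never satisfy $d/\alpha+2/\beta\le 1$; one must take $\beta>2$.) The paper handles the delicate range without any bootstrap: it uses the quantitative $Y_a$-level bounds \eqref{Feb29-90-a} (an $L^2_tL^\infty_x$ bound) and \eqref{Nov2-10} (an $L^\beta_tL^\alpha_x$ bound with $d/\alpha+2/\beta=1$), applied with the specific pair $(\alpha,\beta)=(2a,\,4a/(2a-d))$, and feeds these directly into the $L^p$-estimate \eqref{KK-May7-50}. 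Your bootstrap to maximal $L^p$-in-time regularity for every $p$ is plausible but heavier, and as written the very first pass (placing $u^n\cdot\nabla c^{n+1}$ and the convective terms in $L^2_tL^a_x$) rests on the false $L^\infty_tL^\infty_x$ claim rather than on an inductive use of the bootstrap or on \eqref{Feb29-90-a}.

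The more serious gap is the contraction and uniqueness step. You invoke ``a stability estimate for the linear Fokker--Planck solution with respect to perturbations of the drift, available as a byproduct of the proof of Theorem \ref{Prop-1}''. No such estimate is a byproduct: Theorem \ref{Prop-1} is proved by smoothing $v$ and $\rho_0$ and passing to the limit via the Wasserstein Arzel\`a--Ascoli lemma, a pure compactness argument which yields neither uniqueness of the Fokker--Planck solution nor any quantitative dependence of $\rho$ on $v$. Without such an estimate, neither the convergence of your Picard iterates nor the uniqueness assertion of Theorem \ref{Theorem-weaksolution} closes. The paper supplies precisely this missing ingredient by a direct PDE argument: it writes the difference $\delta_k\rho$ through the Duhamel (heat-kernel) representation, estimates the five source terms $H_1,\dots,H_5$ in $L^\infty_tL^a_x$ by small powers of $T$ times the differences of the iterates (using again \eqref{Feb29-90-a}--\eqref{Nov2-10}), and combines this with maximal-regularity estimates for $\delta_k c$ and $\delta_k u$ to reach the contraction \eqref{KK-May27-100}. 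Some such quantitative stability/uniqueness argument for the linear equation with drift merely in $L^\beta_tL^\alpha_x$ must be proved, not cited; as it stands this step is missing from your plan.
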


We make several comments for Theorem \ref{Theorem-weaksolution}.
\begin{itemize}

\item[(i)] It was shown in  \cite{KMS16} that if the initial data  are sufficiently small in invariant classes, it is known that mild solutions globally exist in time.
More precisely, in case $\kappa(c)=c$ and under the assumption that
$\max\bket{
\norm{\rho_0}_{L^{\frac{d}{2}}}, \norm{c_0}_{(L^{\infty}\cap \dot{W}^{1,d}_{w})}, \norm{u_0}_{L^{d}_{w}}, \norm{\nabla\phi}_{L^{d}_{w}}}$
is small (if $d\ge 3$, $\norm{\rho_0}_{L^{\frac{d}{2}}}$ is relaxed by $\norm{\rho_0}_{L^{\frac{d}{2}}_w}$), mild solutions exist globally in time, e.g. $t^{\frac{d}{2}\bke{\frac{2}{d}-\frac{1}{q}}}\rho \in BC_{w}([0, \infty); L^q(\R^d))$ (see \cite[Theorem 1]{KMS16}).
The initial data in Theorem \ref{Theorem-weaksolution} are assumed to be subcritical, and thus stronger than those in \cite{KMS16}.
Nevertheless, the weak solutions constructed in Theorem \ref{Theorem-weaksolution} is for the case of large data, not small data and furthermore, the estimate \eqref {KK-May27-300} is valid continuously up to initial time (compare to \cite{KMS16}).
%\footnote{We may need to say more advantageous things compared to \cite{KMS16}}.

\item[(ii)] It is not clear if local well-posedness can be established in case that $\rho_0\in L^{\frac{d}{2}}(\R^d)$ with no smallness in $L^{\frac{d}{2}}(\R^d)$-norm or not,
 We also do not know that local well-posedness can be extened to global well-posedness, when $\rho_0\in L^{p}(\R^d)$ for $p>\frac{d}{2}$, and thus
we leave these as open questions.

\item[(iii)] The results of Theorem \ref{Theorem-weaksolution} also hold for $d>3$. We do not, however, include the case for the system \eqref{KSNS-May6-10}-\eqref{KSNS-May6-30}, since a dimension higer than three doesn't seem to be empirically relavant.

\item[(iv)] We note that, due to regularized effect of diffusion, solutions in Theorem \ref{Theorem-weaksolution}
become regular in $[\delta, T]\times \R^d$ for all $\delta>0$, assuming additionally that $\chi$ are $\kappa$ belong to $C^{\infty}(\R^+)$.

\item[(vi)] The initial data $c_0$ and $u_0$ can be relaxed compared to assumptions in \eqref{kk-April12-10}. To be more precise, $W^{2,a}(\R^d)$ in
\eqref{kk-April12-10} can be replaced by $D_a^{1-\frac{1}{a}, a}(\R^d)$ defined by
\[
D_a^{1-\frac{1}{a}, a}(\R^d):=\{ h\in L^a(\R^d): \norm{h}_{D_a^{1-\frac{1}{a}, a}}=\norm{h}_{L^a(\R^d)}
+(\int_0^{\infty} \norm{t^{\frac{1}{a}}A_ae^{-tA_a}h }^a_{L^a}\frac{dt}{t})^{\frac{1}{a} } <\infty
\},
\]
where $A_a$ is the Heat or Stokes operator (see e.g. \cite[Theorem 2.3]{GS}).

\end{itemize}

This paper is organized as follows. In Section 2, preliminary works
are introduced. Section 3 and Section 4 are devoted to proving Theorem \ref{Prop-1} and Theorem \ref{Theorem-weaksolution}, respectively.

%%%%%%%%%%%%%%%%%%%%%%%%%%%%%%%%%%%%%%%%%%%%%%%%%%%%%%%%%%%%%%%%%%%%%%%%%%
%%%%%%%%%%%%%%%%%%%%%%%%%%%%%%%%%%%%%%%%%%%%%%%%%%%%%%%%%%%%%%%%%%%%%%%%%%
%%%%%%%%%%%%%%%%%%%%%%%%%%%%%%%%%%%%%%%%%%%%%%%%%%%%%%%%%%%%%%%%%%%%%%%%%%

\section{Preliminaries}

\subsection{Wasserstein space}
In this subsection, we introduce the Wasserstein space and remind some properties of it. For more detail, readers may refer to e.g. \cite{ags:book} and \cite{V}.

\begin{defn}
Let $\mu$ be a probability measure on $\mathbb{R}^d$. Suppose there is a measurable map $T: \mathbb{R}^d\mapsto \mathbb{R}^d$. Then, the map $T$ induces
a probability measure $ \nu$ on $\mathbb{R}^d$ which is defined as
$$\int_{\mathbb{R}^d} \varphi(y)d\nu(y) = \int_{\mathbb{R}^d} \varphi(T(x))d\mu(x) \qquad \forall ~ \varphi\in C(\mathbb{R}^d).$$
We denote, for convenience, $\nu:=T_\#\mu$ and say that $\nu$ is the push-forward of $\mu$ by $T$.
\end{defn}

Let us denote by $\mathcal{P}_2(\mathbb{R}^d)$ the set of all Borel
probability measures on $\mathbb{R}^d$ with a finite second moment. For $\mu,\nu\in\mathcal{P}_2(\mathbb{R}^d)$, we consider
\begin{equation}\label{Wasserstein dist}
W_2(\mu,\nu):=\left(\inf_{\gamma\in\Gamma(\mu,\nu)}\int_{\mathbb{R}^d\times
\mathbb{R}^d}|x-y|^2d\gamma(x,y)\right)^{\frac{1}{2}},
\end{equation}
where $\Gamma(\mu,\nu)$ denotes the set of all Borel probability
measures on $\mathbb{R}^d\times \mathbb{R}^d$ which has $\mu$ and
$\nu$ as marginals, i.e.
$$\gamma(A\times \mathbb{R}^d)= \mu(A)\qquad \gamma(\mathbb{R}^d\times A)= \nu(A) $$
for every Borel set $A\subset \mathbb{R}^d.$

Equation (\ref{Wasserstein dist}) defines a distance on
$\mathcal{P}_2(\mathbb{R}^d)$ which is called the {\it Wasserstein distance}.
 Equipped with the Wasserstein distance,  $\mathcal{P}_2(\mathbb{R}^d)$ is called the {\it Wasserstein space}.
 It is known that the infimum in the right hand side of Equation
(\ref{Wasserstein dist}) always achieved. We will denote by $\Gamma_o(\mu,\nu)$ the set of all $\gamma$ which minimize the expression.

If $\mu$ is
absolutely continuous with respect to the Lebesgue measure then there exists a convex function $\phi$ such that
$\gamma:=(Id\times \nabla \phi)_\# \mu$ is the unique element of $\Gamma_o(\mu,\nu)$, that is, $\Gamma_o(\mu,\nu)=\{(Id\times \nabla \phi)_\# \mu \}$.

\begin{defn}\label{Convexity}
Let $\phi:\mathcal{P}_2(\mathbb{R}^d)\mapsto (-\infty,\infty]$.
We say that $\phi$ is convex in $\mathcal{P}_2(\mathbb{R}^d)$ if for every
couple $\mu_1,\mu_2\in \mathcal{P}_2(\mathbb{R}^d)$ there exists an
optimal plan $\gamma\in\Gamma_o(\mu_1,\mu_2)$ such that
$$\phi(\mu_t^{1\rightarrow 2})\leq (1-t)\phi(\mu_1)+t\phi(\mu_2)\qquad \forall ~ t\in[0,1], $$
where $\mu_t^{1\rightarrow 2}$ is a constant speed geodesic
between $\mu^1$ and $\mu^2$ defined as
$$\mu_t^{1\rightarrow 2} := ((1-t)\pi^1 + t\pi^2)_\#\gamma .$$
Here, $\pi^1: \mathbb{R}^d\times \mathbb{R}^d \mapsto
\mathbb{R}^d$ and $\pi^2: \mathbb{R}^d\times \mathbb{R}^d \mapsto
\mathbb{R}^d$ are the first and second projections of $
\mathbb{R}^d\times \mathbb{R}^d$ onto $ \mathbb{R}^d$ defined by
$$\pi^1(x,y)=x,\quad  \pi^2(x,y)=y\qquad \forall ~ x,y \in \mathbb{R}^d.$$
\end{defn}

As we have seen in Theorem \ref{Prop-1} and Theorem \ref{Theorem-weaksolution}, we will find a solution of $\rho$ equation \eqref{KSNS-May6-10} in the class of absolutely continuous
curves in the Wasserstein space. Now, we introduce the definition of absolutely continuous curve and its relation with the continuity equation.
\begin{defn}
Let $\sigma:[a,b]\mapsto \mathcal{P}_2(\mathbb{R}^d)$ be a curve.
We say that $\sigma$ is absolutely continuous and denote it by $\sigma \in AC_2(a,b;\mathcal{P}_2(\mathbb{R}^d))$, if there exists $m\in
L^2([a,b])$ such that
\begin{equation}\label{AC-curve}
W_2(\sigma(s),\sigma(t))\leq \int_s^t m(r)dr\qquad \forall ~ a\leq s\leq t\leq b.
\end{equation}
If $\sigma \in AC_2(a,b;\mathcal{P}_2(\mathbb{R}^d))$, then the limit
$$|\sigma'|(t):=\lim_{s\rightarrow t}\frac{W_2(\sigma(s),\sigma(t))}{|s-t|} ,$$
exists for $L^1$-a.e $t\in[a,b]$. Moreover, the function $|\sigma'|$ belongs to $L^2(a,b)$ and satisfies
\begin{equation}
|\sigma'|(t)\leq m(t) \qquad \mbox{for} ~L^1-\mbox{a.e.}~t\in [a,b],
\end{equation}
for any $m$ satisfying \eqref{AC-curve}.
We call $|\sigma'|$ by the metric derivative of $\sigma$.
\end{defn}

\begin{lemma}[\cite{ags:book}, Theorem 8.3.1]\label{representation of AC curves}
If $\sigma\in AC_2(a,b;\mathcal{P}_2(\mathbb{R}^d))$ then there exists a Borel vector field $v: \bbr^d\times(a,b)\mapsto \bbr^d$ such that
$$v_t \in L^2(\sigma_t) \mbox{   for} ~~L^1-a.e  ~~ t\in [a,b]  , $$
and the continuity equation
\begin{equation}\label{continuity-eqn}
\partial_t\sigma_t +\nabla\cdot(v_t\sigma_t)=0,
\end{equation}
holds in the sense of distribution sense.

Conversely, if a weak* continuous curve $\sigma : [a,b] \mapsto \mathcal{P}_2(\mathbb{R}^d)$ satisfies the continuity equation \eqref{continuity-eqn} for some Borel vector field
$v_t$ with $||v_t||_{L^2(\sigma_t)}\in L^2(a,b)$, then $\sigma: [a,b]\mapsto \mathcal{P}_2(\mathbb{R}^d)$ is absolutely continuous and
$|\sigma_t'|\leq ||v_t||_{L^2(\sigma_t)}$ for $L^1$-a.e $t\in [a,b]$.
\end{lemma}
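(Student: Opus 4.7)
The plan is to treat the two implications separately, following the strategy in \cite{ags:book}.

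\textbf{Forward direction.} Given $\sigma\in AC_2(a,b;\mathcal{P}_2(\mathbb{R}^d))$, I would first quantify how fast smooth moments change along the curve. For $\varphi\in C_c^\infty(\mathbb{R}^d)$ and $a\le s<t\le b$, picking $\gamma_{s,t}\in\Gamma_o(\sigma_s,\sigma_t)$ and using the mean value theorem along the segment $[x,y]$ gives
\[
\left|\int\varphi\,d\sigma_t-\int\varphi\,d\sigma_s\right|\le\int|\nabla\varphi(x)|\,|x-y|\,d\gamma_{s,t}(x,y)+o\bigl(W_2(\sigma_s,\sigma_t)\bigr),
\]
so that Cauchy--Schwarz and the definition of the metric derivative yield, for $\mathcal{L}^1$-a.e.\ $t$,
\[
\left|\tfrac{d}{dt}\int\varphi\,d\sigma_t\right|\le|\sigma'|(t)\,\|\nabla\varphi\|_{L^2(\sigma_t)}.
\]
The next step is to regard $\nabla\varphi\mapsto\frac{d}{dt}\int\varphi\,d\sigma_t$ as a bounded linear functional on the closure of $\{\nabla\varphi:\varphi\in C_c^\infty\}$ inside $L^2(\sigma_t;\mathbb{R}^d)$. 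Extending this functional via Hahn--Banach and applying the Riesz representation theorem produces, for a.e.\ $t$, a vector field $v_t\in L^2(\sigma_t;\mathbb{R}^d)$ satisfying $\|v_t\|_{L^2(\sigma_t)}\le|\sigma'|(t)$ and $\int\nabla\varphi\cdot v_t\,d\sigma_t=\frac{d}{dt}\int\varphi\,d\sigma_t$, which is exactly the continuity equation in the sense of distributions.

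\textbf{Converse direction.} Given the continuity equation with $\|v_t\|_{L^2(\sigma_t)}\in L^2(a,b)$, the idea is to compare $\sigma$ with pushforwards of $\sigma_s$ by the flow of $v$. I would regularize in space by convolving $v_t\sigma_t$ and $\sigma_t$ against a smooth kernel to obtain smooth $v^\varepsilon,\sigma^\varepsilon$ solving the continuity equation classically, and then solve the ODE $\dot X^\varepsilon_r=v^\varepsilon_r(X^\varepsilon_r)$. Writing $\sigma^\varepsilon_t=(X^\varepsilon_{s,t})_\#\sigma^\varepsilon_s$ and applying Jensen's inequality gives
\[
W_2^2(\sigma^\varepsilon_s,\sigma^\varepsilon_t)\le(t-s)\int_s^t\|v^\varepsilon_r\|_{L^2(\sigma^\varepsilon_r)}^2\,dr.
\]
Passing $\varepsilon\to 0$, using lower semicontinuity of $W_2$ under weak convergence together with the contractive effect of convolution on $L^2$-norms, I obtain
\[
W_2^2(\sigma_s,\sigma_t)\le(t-s)\int_s^t\|v_r\|_{L^2(\sigma_r)}^2\,dr,
\]
which both exhibits $\sigma$ as absolutely continuous with admissible speed $m(r)=\|v_r\|_{L^2(\sigma_r)}$ and, after dividing by $t-s$ and invoking Lebesgue differentiation, delivers $|\sigma'|(t)\le\|v_t\|_{L^2(\sigma_t)}$ for $\mathcal{L}^1$-a.e.\ $t$.

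\textbf{Main obstacle.} The most delicate point is in the forward direction: the pointwise-in-$t$ Hahn--Banach/Riesz construction must be upgraded to a \emph{jointly} Borel-measurable field $(t,x)\mapsto v_t(x)$. The standard fix is to choose $v_t$ canonically as the element of minimal $L^2(\sigma_t)$-norm among all admissible representatives (equivalently, the orthogonal projection of $0$ onto a closed affine subspace generated by a countable family of test-function gradients), and then deduce Borel measurability in $t$ from this pointwise variational formula. A close second difficulty is the passage $\varepsilon\to 0$ in the converse direction: one must justify that the smooth kinetic-energy bound survives the convolution limit, for which the key tool is Jensen's inequality applied to the disintegration of $v^\varepsilon_t\sigma^\varepsilon_t$ against $\sigma^\varepsilon_t$, yielding lower semicontinuity of $\int\|v_r\|_{L^2(\sigma_r)}^2dr$ along the regularization.
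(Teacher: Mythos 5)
The paper gives no proof of this lemma at all---it is quoted directly from Ambrosio--Gigli--Savar\'e, Theorem 8.3.1---and your sketch faithfully reproduces that reference's argument: the estimate $\bigl|\frac{d}{dt}\int\varphi\,d\sigma_t\bigr|\le|\sigma'|(t)\,\|\nabla\varphi\|_{L^2(\sigma_t)}$ followed by duality/Riesz representation on the closure of gradients for the forward direction, and mollification, flow maps and Jensen's inequality with lower semicontinuity for the converse. The one point where you deviate is exactly the obstacle you flag: AGS sidestep the pointwise-in-$t$ minimal-norm selection (and the attendant measurable-selection argument) by performing the Hahn--Banach/Riesz step once in space--time, on the measure $\bar\sigma=\int_a^b\sigma_t\,dt$ weighted by $|\sigma'|(t)$, which produces a jointly Borel field $v$ automatically and is the cleaner way to close that gap if you write the details out.
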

{\it Notation} : In Lemma \ref{representation of AC
curves}, we use notation $v_t:=v(\cdot,t)$ and
$\sigma_t:=\sigma(t)$. Throughout this paper, we keep this
convention, unless any confusion is to be expected, and a usual
notation $\partial_t$ is adopted for temporal derivative, i.e.
$f_t:=f(\cdot,t)$ and $\partial_t f:= \frac{\partial f}{\partial t}$.

\begin{lemma}\label{Lemma : Arzela-Ascoli}
Let $\sigma_n \in AC_2(a,b;\mathcal{P}_2(\mathbb{R}^d))$ be a
sequence and suppose there exists $m\in L^2([a,b])$ such that
\begin{equation}\label{equi-continuity}
W_2(\sigma_n(s),\sigma_n(t))\leq \int_s^t m(r)dr\qquad \forall ~
a\leq s\leq t\leq b,
\end{equation}
for all $n\in \mathbb{N}.$ Then there exists a subsequnece
$\sigma_{n_k}$ such that
\begin{equation}\label{eq1 : Lemma : Arzela-Ascoli}
\sigma_{n_k}(t) \mbox{ weak* converges to} ~~\sigma(t), \qquad
\text{for all} ~~t\in[a,b],
\end{equation}
for some $\sigma\in AC_2(a,b;\mathcal{P}_2(\mathbb{R}^d))$
satisfying
\begin{equation}\label{eq2 : Lemma : Arzela-Ascoli}
W_2(\sigma(s),\sigma(t))\leq \int_s^t m(r)dr\qquad \forall ~ a\leq
s\leq t \leq b.
\end{equation}
\end{lemma}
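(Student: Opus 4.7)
The plan is to apply a diagonal Arzela-Ascoli argument in the Wasserstein space, with weak$^*$ convergence as the target topology and lower semi-continuity of $W_2$ as the main tool for transferring the equi-continuity estimate to the limit.

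First I would establish uniform tightness of the family $\{\sigma_n(t)\}$. By the triangle inequality in $W_2$ and \eqref{equi-continuity},
\[
W_2(\sigma_n(t),\delta_0)\le W_2(\sigma_n(a),\delta_0)+\int_a^b m(r)\,dr\le W_2(\sigma_n(a),\delta_0)+\sqrt{b-a}\,\|m\|_{L^2(a,b)},
\]
so a uniform bound on $\int |x|^2 d\sigma_n(a)$ (implicit in the applications of this lemma, where all approximants share the same initial datum) yields $\sup_{n,t}\int|x|^2 d\sigma_n(t)<\infty$. Markov's inequality then gives tightness of $\{\sigma_n(t)\}_n$ at every $t$. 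Choosing a countable dense subset $D\subset[a,b]$ containing $a,b$, Prokhorov's theorem and a diagonal extraction produce a subsequence $\sigma_{n_k}$ with $\sigma_{n_k}(t_i)\rightharpoonup^*\sigma(t_i)$ in $\mathcal{P}_2(\mathbb{R}^d)$ for every $t_i\in D$.

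I would next extend $\sigma$ from $D$ to all of $[a,b]$. Using lower semi-continuity of $W_2$ under weak$^*$ convergence together with \eqref{equi-continuity},
\[
W_2(\sigma(t_i),\sigma(t_j))\le\liminf_{k\to\infty} W_2(\sigma_{n_k}(t_i),\sigma_{n_k}(t_j))\le \Big|\int_{t_i}^{t_j}m(r)\,dr\Big|.
\]
Since $m\in L^2(a,b)$, the right-hand side is a modulus of continuity, so $t_i\mapsto\sigma(t_i)$ is uniformly continuous on $D$ into the complete metric space $(\mathcal{P}_2(\mathbb{R}^d),W_2)$ and extends uniquely to a continuous curve $\sigma:[a,b]\to\mathcal{P}_2(\mathbb{R}^d)$. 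Applying the same lower-semicontinuity argument at the limit then produces $W_2(\sigma(s),\sigma(t))\le \int_s^t m(r)\,dr$, which is \eqref{eq2 : Lemma : Arzela-Ascoli} and shows $\sigma\in AC_2(a,b;\mathcal{P}_2(\mathbb{R}^d))$.

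The main obstacle is upgrading the weak$^*$ convergence from $D$ to arbitrary $t\in[a,b]$, since weak$^*$ limits do not propagate automatically in time. To handle this I would fix $t\in[a,b]\setminus D$ and pick any weak$^*$ cluster point $\tau$ of $\{\sigma_{n_k}(t)\}$ along a further subsequence $\sigma_{n_{k_\ell}}(t)\rightharpoonup^*\tau$, which exists by tightness. For each $t_i\in D$, lower semi-continuity of $W_2$ and \eqref{equi-continuity} give
\[
W_2(\tau,\sigma(t_i))\le\liminf_{\ell\to\infty}W_2(\sigma_{n_{k_\ell}}(t),\sigma_{n_{k_\ell}}(t_i))\le\Big|\int_{t_i}^{t}m(r)\,dr\Big|.
\]
Sending $t_i\to t$ through $D$, the right-hand side vanishes and $\sigma(t_i)\to\sigma(t)$ in $W_2$, forcing $\tau=\sigma(t)$. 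Since $\sigma(t)$ is the unique weak$^*$ cluster point, the entire subsequence $\sigma_{n_k}(t)$ converges weak$^*$ to $\sigma(t)$, which is \eqref{eq1 : Lemma : Arzela-Ascoli}.
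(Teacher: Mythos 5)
Your argument is correct in substance, but it takes a genuinely different route from the paper: the paper disposes of this lemma in one line, citing Proposition 3.3.1 of Ambrosio--Gigli--Savar\'e (the metric Arzel\`a--Ascoli theorem for a weaker Hausdorff topology under which the distance is sequentially lower semicontinuous) together with the assertion that $\mathcal{P}_2(\mathbb{R}^d)$ is weak* compact, whereas you reprove that proposition from scratch in the Wasserstein setting: uniform second moments giving tightness, Prokhorov plus a diagonal extraction over a countable dense set of times, lower semicontinuity of $W_2$ under weak* convergence to pass \eqref{equi-continuity} to the limit, $W_2$-uniform continuity and completeness of $(\mathcal{P}_2,W_2)$ to extend the limit curve to all of $[a,b]$, and a uniqueness-of-cluster-points argument to upgrade the convergence at every $t$; all of these steps are standard and correctly executed, and together they buy a self-contained proof instead of a black-box citation. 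The one point to flag is your first step: the uniform bound on $\int|x|^2\,d\sigma_n(a)$ used for tightness is not among the stated hypotheses, and it genuinely cannot be dispensed with --- take $\sigma_n(t)\equiv\delta_{x_n}$ with $|x_n|\to\infty$, which satisfies \eqref{equi-continuity} with $m=0$ yet has no subsequence converging weak* to a probability measure in $\mathcal{P}_2(\mathbb{R}^d)$ --- so the lemma as literally stated needs this (or some equivalent) compactness input; the paper's one-line proof buries exactly this issue in the claim that $\mathcal{P}_2(\mathbb{R}^d)$ is weak* compact, which is accurate only for moment-bounded (hence tight) subsets. In the sole application of the lemma, Step 3 of the proof of Theorem \ref{Prop-1}, the approximating curves share uniformly bounded second moments at the initial time by \eqref{eq9 : Theorem-1} and then on all of $[0,T]$ by the equicontinuity estimate, so the extra assumption you import is harmless there; in effect your proposal is a corrected, self-contained version of the argument the paper delegates to the reference.
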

\begin{proof}
Refer proposition 3.3.1 in \cite{ags:book} with the fact that $\mathcal{P}_2(\mathbb{R}^d)$ is weak* compact.
\end{proof}

%%%%%%%%%%%%%%%%%%%%%%%%%%%%%%%%%%%%%%%%%%%%%%
%%%%%%%%%%%%%%%%%%%%%%%%%%%%%%%%%%%%%%%%%%%%%%
%%%%%%%%%%%%%%%%%%%%%%%%%%%%%%%%%%%%%%%%%%%%%%

\subsection{Estimates of heat equation and Stokes system}

We first recall some estimates of the heat equation, which are useful for our purpose.
For convenience, we denote $Q_t=\R^d\times [0,t]$ for $t>0$.

Let $w$ be the solution of the following heat equation:
\begin{equation}\label{heat-equ}
\partial_t w-\Delta w =\nabla\cdot g+h \quad\mbox{ in } Q_t,\qquad w(x,0)=w_0,
\end{equation}
where $g$ is a d-dimensional vector field and $h$ is a scalar function.

Let $(\alpha, \beta)$ with $d/\alpha+2/\beta\le 1$ and $d<\alpha<\infty$.
Suppose that $\nabla w_0\in L^{\alpha}_x$, $g\in L^{\beta}_tL^{\alpha}_x$ and $h\in L^{\beta}_tL^{\gamma}_x$, where $1<\gamma\le \alpha$
with $1/r<1/\alpha+1/d$. Then, it follows that
\begin{equation}\label{heat-est10}
\norm{\nabla w}_{L^{\beta}_tL^{\alpha}_x}\le C\bke{\norm{g}_{L^{\beta}_tL^{\alpha}_x}
+t^{\frac{1}{2}(\frac{d}{\alpha}+1-\frac{d}{\gamma})}\norm{h}_{L^{\beta}_tL^{\gamma}_x}
+t^{\frac{1}{\beta}}\norm{\nabla w_0}_{L^{\alpha}_x}}
\end{equation}
The estimate \eqref{heat-est10} is well-known, but, for clarity, we give a sketch of its proof.

Indeed, we decompose $w=w_1+w_2+w_3$ such that $w_i$, $i=1,2,3$ satisfies
\[
\partial_t w_1-\Delta w_1 =\nabla\cdot g \quad\mbox{ in } Q_t,\qquad w_1(x,0)=0
\]
\[
\partial_t w_2-\Delta w_2 =h \quad\mbox{ in } Q_t,\qquad w_2(x,0)=0
\]
\[
\partial_t w_3-\Delta w_3 =0 \quad\mbox{ in } Q_t,\qquad w_3(x,0)=w_0
\]
Via representation formula of the heat equation  for example $w_1$,
\[
w_1(x, t)=-\int_0^{t}\int_{\R^d}\nabla\Gamma(x-y,t-s)g(y,s)dyds,
\]
where $\Gamma$ is the heat kernel.
Due to maximal regularity, we observe that
\[
\norm{\nabla w_1}_{L^{\beta}_tL^{\alpha}_x}\le C\norm{g}_{L^{\beta}_tL^{\alpha}_x}.
\]
On the other hand, it is easy to see that
\[
\norm{\nabla w_3}_{L^{\beta}_tL^{\alpha}_x}\le Ct^{\frac{1}{\beta}}\norm{\nabla w_0}_{L^{\alpha}_x}.
\]
We suffices to show that
\begin{equation}\label{heat-h10}
\norm{\nabla w_2}_{L^{\beta}_tL^{\alpha}_x}\le Ct^{1-\frac{d}{2\gamma}}\norm{h}_{L^{\beta}_tL^{\gamma}_x}.
\end{equation}
Indeed, via representation formula of the heat equation, we have
\[
w_2(x, t)=\int_0^{t}
\int_{\R^d}\Gamma(x-y,t-s)h(y,s)dyds.
\]
Using potential estimate, we have
\[
\norm{\nabla w_2(t)}_{L^{\alpha}_x}\le C\int_0^t (t-s)^{-\frac{d}{2}(\frac{1}{\gamma}-\frac{1}{\alpha})-\frac{1}{2}}\norm{h(s)}_{L^{\gamma}_x}ds.
\]
Integrating in time, we obtain
\[
\norm{\nabla w_2}_{L^{\beta}_tL^{\alpha}_x}\le C t^{-\frac{d}{2}(\frac{1}{\gamma}-\frac{1}{\alpha})+\frac{1}{2}}
\norm{h}_{L^{\beta}_tL^{\gamma}_x}.
\]
We deduce the estimate \eqref{heat-est10}.

\begin{remark}
In case that $g=0$, for any $(\alpha, \beta)$ with $d/\alpha+2/\beta\le 1$, $d<\alpha\le \infty$ and $\gamma<\infty$
\begin{equation}\label{heat-est15}
\norm{\nabla w}_{L^{\beta}_tL^{\alpha}_x}\le C\bke{t^{\frac{1}{2}(\frac{d}{\alpha}+1-\frac{d}{\gamma})}\norm{h}_{L^{\beta}_tL^{\gamma}_x}
+t^{\frac{1}{\beta}}\norm{\nabla w_0}_{L^{\alpha}_x}}.
\end{equation}
Compared to \eqref{heat-est10}, the case that $(\alpha, \beta)=(\infty, 2)$ is also included in \eqref{heat-est15}.
\end{remark}

We also remind the maximal regularity of heat equation and Stoke system (see e.g. \cite{LSU} and
\cite{S}). Let $w$ be a solution of
\[
w_t-\Delta w=f\qquad \mbox{in }\,\,Q_T:=\R^d\times [0,T]
\]
\[
w(x,0)=w_0(x)\qquad \mbox{in }\,\,\R^d.
\]
Then, the following a priori estimate holds for any $1<p, q<\infty$
\begin{equation}\label{KK-June-1000}
\norm{w_t}_{L^{p,q}_{x,t}(Q_T)}+\norm{\nabla^2
w}_{L^{p,q}_{x,t}(Q_T)}\le
C\norm{f}_{L^{p,q}_{x,t}(Q_T)}+CT^{\frac{1}{q}}\norm{\nabla^2 w_0}_{L^{p}(\R^d)}.
\end{equation}
In case of the following Stokes
system
\[
v_t-\Delta v+\nabla \pi=f,\quad{\rm div}\,\,v=0\qquad \mbox{in
}\,\,Q_T:=\R^d\times [0,T]
\]
\[
v(x,0)=v_0(x)\qquad \mbox{in }\,\,\R^d.
\]
Similarly, it is known that the following a priori estimate holds:
\begin{equation}\label{KK-June-10000}
\norm{v_t}_{L^{p,q}_{x,t}(Q_T)}+\norm{\nabla^2
v}_{L^{p,q}_{x,t}(Q_T)}+\norm{\nabla
\pi}_{L^{p,q}_{x,t}(Q_T)}\le
C\norm{f}_{L^{p,q}_{x,t}(Q_T)}+CT^{\frac{1}{q}}\norm{\nabla^2 v_0}_{L^{p}(\R^d)},
\end{equation}
where $1<p,q<\infty$.

In next lemma, we obtain some estimates of functions in $Y_a$, which we will use later.

\begin{lemma}
Let $d/2<a\le d$ and $d/\alpha+2/\beta= 1$. Assume that $\alpha, \beta, p$ and $q$ are numbers satisfying
\[
\frac{d}{\alpha}+\frac{2}{\beta}=1, \quad d<\alpha<\infty,\qquad \frac{d}{p}+\frac{2}{q}=\frac{d}{a}.
\]
Suppose that $f\in Y_a(Q_T)$ and $f(x,0)=f_0\in (L^{\alpha}\cap L^{\infty})(\R^d)$. Then,
\begin{equation}\label{Feb29-90-a}
\norm{f}_{L^2_tL^{\infty}_x(Q_T)}\le
CT^{1-\frac{d}{2a}}\norm{f}_{Y_a(Q_T)}  +CT^{\frac{1}{2}}\norm{f_0}_{L^{\infty}(\R^d)},
\end{equation}
\begin{equation}\label{Nov2-10}
\norm{f}_{L^{\beta}_tL^{\alpha}_x(Q_T)}\le
CT^{1-\frac{d}{2a}}\norm{f}_{Y_a(Q_T)}  +CT^{\frac{1}{\beta}}\norm{f_0}_{L^{\alpha}(\R^d)}.
\end{equation}

\end{lemma}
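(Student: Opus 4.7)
The plan is to split $f$ via Duhamel's formula and handle the two pieces with the standard $L^a$–$L^q$ smoothing estimates for the heat semigroup together with Young's convolution inequality in time. Set $g := \partial_t f - \Delta f$; by the definition of the $Y_a$-norm, $\|g\|_{L^2(0,T;L^a)} \le \|f\|_{Y_a(Q_T)}$, and
\[
f(t) = e^{t\Delta} f_0 + \int_0^t e^{(t-s)\Delta} g(s)\,ds =: f_1(t) + f_2(t).
\]
For $f_1$, contractivity of the heat semigroup on $L^q(\R^d)$ yields $\|f_1(t)\|_{L^\infty} \le \|f_0\|_{L^\infty}$ and $\|f_1(t)\|_{L^\alpha} \le \|f_0\|_{L^\alpha}$; taking the $L^2_t$ and $L^\beta_t$ norms produces the boundary terms $C T^{1/2}\|f_0\|_{L^\infty}$ in \eqref{Feb29-90-a} and $C T^{1/\beta}\|f_0\|_{L^\alpha}$ in \eqref{Nov2-10}.

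For $f_2$, I would use the standard smoothing estimate $\|e^{\tau\Delta}h\|_{L^q(\R^d)} \le C\tau^{-\sigma}\|h\|_{L^a(\R^d)}$ with $\sigma = (d/2)(1/a-1/q)$, valid for $a \le q \le \infty$, to obtain the pointwise bound
\[
\|f_2(t)\|_{L^q} \le C \int_0^t (t-s)^{-\sigma}\|g(s)\|_{L^a}\,ds,
\]
i.e.\ a convolution in time with the kernel $K(\tau) := \tau^{-\sigma}$. I would then apply Young's convolution inequality in $t$ with $\|g(\cdot)\|_{L^a}\in L^2(0,T)$, once for the Sobolev pair $(q,r)=(\infty,2)$ and once for $(q,r)=(\alpha,\beta)$, the Young exponent $p$ being fixed by $1/p + 1/2 = 1/r + 1$.

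In the first case $p=1$, $\sigma = d/(2a)$, and $\|K\|_{L^1(0,T)} \le C T^{1-d/(2a)}$; the integrability condition $\sigma p < 1$ reduces exactly to the hypothesis $a>d/2$, giving \eqref{Feb29-90-a}. In the second case $1/p = 1/\beta + 1/2$, and using $d/\alpha + 2/\beta = 1$ a short arithmetic check shows
\[
\frac{1}{p} - \sigma = \frac{1}{\beta} + \frac{1}{2} - \frac{d}{2a} + \frac{d}{2\alpha} = 1 - \frac{d}{2a},
\]
together with $\sigma p < 1 \Leftrightarrow a > d/2$, so $\|K\|_{L^p(0,T)} \le C T^{1-d/(2a)}$ and \eqref{Nov2-10} follows after combining with the $f_1$ bound. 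The only genuine obstacle in the argument is this accounting of exponents — ensuring that the Young integrability threshold lines up precisely with $a>d/2$ and that the exponent of $T$ comes out to $1-d/(2a)$ in both cases; once the scaling relation $d/\alpha + 2/\beta = 1$ is plugged in, both checks reduce to a one-line calculation.
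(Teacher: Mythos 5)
Your argument is correct and follows essentially the same route as the paper's proof: write $g:=\partial_t f-\Delta f$ (so $\|g\|_{L^2_tL^a_x}\le C\|f\|_{Y_a}$), represent $f$ by Duhamel's formula, bound the initial-data piece by semigroup contractivity, and bound the forcing piece by the $L^a\to L^q$ heat-kernel smoothing estimate followed by a convolution-in-time (Young/H\"older) estimate, with the condition $a>d/2$ and the relation $d/\alpha+2/\beta=1$ yielding exactly the stated powers of $T$. Your exponent bookkeeping is, if anything, more explicit than the paper's sketch, which only details the $L^\beta_tL^\alpha_x$ case.
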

\begin{proof}
Indeed, we consider
\[
\partial_t f-\Delta f=g\quad \mbox{ in }\,\,Q_t,\qquad
f(x,0)=f_0(x)\quad \mbox{ in }\,\,\R^d.
\]
Via representation formula of the heat equation, we have
\[
f(x, t)=\int_{\R^d}\Gamma(x-y,t)f_0(y)dy+\int_0^{t}
\int_{\R^d}\Gamma(x-y,t-s)g(y,s)dyds,
\]
where $\Gamma$ is the heat kernel. Since $g\in L^2_tL^a_x(Q_T)$, direct computations show that
\[
\norm{f}_{L^{\beta}_tL^{\alpha}_x(Q_T)}\le
C\norm{\int_0^T (t-s)^{-\frac{d}{2}(\frac{1}{a}-\frac{1}{2a})}\norm{g(s)}_{L^a_x}ds}_{L^{\beta}_t}+Ct^{\frac{1}{\beta}}\norm{f_0}_{L^{\alpha}_x}
\]
\[
\le
CT^{\frac{1}{2}(1-\frac{d}{2a})}\norm{g}_{L^2_tL^a_x}+Ct^{\frac{1}{\beta}}\norm{f_0}_{L^{\alpha}_x}
\]
\[
\le
CT^{\frac{1}{2}(1-\frac{d}{2a})}\norm{f}_{Y_a(Q_t)}+Ct^{\frac{1}{\beta}}\norm{f_0}_{L^{\alpha}_x}.
\]
Since the other estimates can be
similarly verified, we skip its details.
\end{proof}

%%%%%%%%%%%%%%%%%%%%%%%%%%%%%%%%%%%%%%%%%%%%%%%%%%%%%%%%%%%%%%%%%%%%%%%%%%
%%%%%%%%%%%%%%%%%%%%%%%%%%%%%%%%%%%%%%%%%%%%%%%%%%%%%%%%%%%%%%%%%%%%%%%%%%
%%%%%%%%%%%%%%%%%%%%%%%%%%%%%%%%%%%%%%%%%%%%%%%%%%%%%%%%%%%%%%%%%%%%%%%%%%

\section{Proof of Theorem \ref{Prop-1}}

In this section, we provide the proof of Theorem \ref{Prop-1}. We start with some a priori estimates.
\begin{lemma}\label{Lemma : a priori 1}
Let $\frac{d}{p}+\frac{2}{q}=1$ and $p>d$.  Suppose
 that a non-negative function $\rho_0:\R^d\rightarrow \R$ and a vector field
$v:\R^d\rightarrow \R^d$ satisfy
$$\rho_0 \in L^\alpha(\bbr^d) \cap C_c^\infty(\bbr^d) \qquad \mbox{and} \qquad v\in L^q(0,T; L^p(\bbr^d)) \cap C_c^\infty([0,T]\times \bbr^d).$$%
%$$\rho_0 \in L^p(\bbr^d) \cap C_c^\infty(\bbr^d) \qquad \mbox{and} \qquad v \in L^{\frac{2\alpha}{\alpha-d}}(0,T:L^\alpha (\bbr^d)) \cap C_c^\infty([0,T]\times \bbr^d).$$
Let $\alpha >1$ and $\rho $ be a solution of
\begin{equation}\label{eq 130 : A priori}
\partial_t\rho =\nabla\cdot(\nabla \rho - v \rho),\qquad \rho(0,\cdot)=\rho_0.
\end{equation}
%such that $\nabla \rho^{\frac{\alpha}{2}} \in L^2([0,T] \times \bbr^d)$.
Then we have
\begin{equation}\label{eq 170 : A priori}
\sup_{t\in[0,T]}\|\rho(t)\|_{L^\alpha }^\alpha \leq \|\rho_0\|_{L^\alpha}^\alpha e^{C\int_0^T \|v(\tau)\|_{L^p}^q d\tau}.
\end{equation}
\end{lemma}

\begin{proof}
%The existence of a solution of \eqref{eq 13 : A priori} follows from Theorem 1.1 of \cite{KK}. Now, we estimate $\|\rho(t) \|_{L^\alpha(\bbr^d)}$.
%To get \eqref{eq 17 : A priori},
We multiply \eqref{eq 130 : A priori} with $\rho^{\alpha-1}$ and integrate w.r.t spacial variable to get
\[
\frac{1}{\alpha}\frac{d}{dt} \int_{\mathbb{R}^d}\rho^{\alpha}~dx+\frac{4(\alpha-1)}{\alpha^2}\int_{\mathbb{R}^d} \big| \nabla \rho^{\frac{\alpha}{2}}\big|^2  \,dx
=-\int_{\mathbb{R}^d}\nabla \cdot \big(v\rho \big)~\rho^{\alpha-1} \,dx.
\]
Due to the Gagliardo-Nirenberg inequality, namely
\begin{equation}\label{eq 210 : A priori}
\|g \|_\beta \le  C\|\nabla g \|_2^{1-\theta'}\|g\|_2^{\theta'}  \qquad \rm{for}
\quad \frac{1}{\beta}=\Big(\frac{1}{2} - \frac{1}{d}\Big)\theta' +\frac{1-\theta'}{2}, \quad 2<\beta<2^*=\frac{2d}{d-2},
\end{equation}
the righthand side is estimated as follows:
\[
-\int_{\mathbb{R}^d}\nabla \cdot \big(v\rho \big)~\rho^{\alpha-1}~dx
=(\alpha-1)\int_{\mathbb{R}^d} \rho^{\alpha-1}v\cdot \nabla \rho ~dx
=\frac{2(\alpha-1)}{\alpha} \int_{\mathbb{R}^d} \rho^{\frac{\alpha}{2}} v\cdot \nabla \rho^{\frac{\alpha}{2}} ~dx
\]
\begin{equation}\label{eq 220 : A priori}
\lesssim \| v\|_{L^p} \| \nabla \rho^{\frac{\alpha}{2}} \|_{L^2}
\| \rho^{\frac{\alpha}{2}}\|_{\frac{2 p}{p-2}}\le \frac{2(\alpha-1)}{\alpha^2}\| \nabla \rho^{\frac{\alpha}{2}} \|^2_{L^2}+C\| v\|_{p}^{\frac{2 p}{p-d}} \| \rho\|_{L^\alpha}^{\alpha}.
\end{equation}
Therefore, we obtain
\begin{equation}\label{eq 140 : A priori}
\begin{aligned}
\frac{1}{\alpha}\frac{d}{dt} \| \rho\|_{L^\alpha}^{\alpha} + \frac{2(\alpha-1)}{\alpha^2}\| \nabla \big(\rho^{\frac{\alpha}{2}} \big)\|_{L^2}^2
 \leq C\| v\|_{p}^{\frac{2 p}{p-d}} \| \rho\|_{L^\alpha}^{\alpha},
 \end{aligned}
\end{equation}
which yields
\begin{equation}\label{eq 150 : A priori}
\begin{aligned}
\| \rho(t)\|_{L^\alpha}^{\alpha} \leq \| \rho_0\|_{L^\alpha}^{\alpha}~e^{C\int_0^t \| v(\tau)\|_{p}^{\frac{2 p}{p-d}} d\tau}.
\end{aligned}
\end{equation}
This completes the proof.
\end{proof}
Next, we estimate the Wasserstein distance, which turns out to be H\"older continuous.

\begin{lemma}\label{Lemma : a priori 2}
Let $\frac{d}{p}+\frac{2}{q}=1$ and $p>d$.  Suppose
 that a non-negative function $\rho_0:\R^d\rightarrow \R$ and a vector field
$v:\R^d\rightarrow \R^d$ satisfy
$$\rho_0 \in \mathcal{P}_2(\bbr^d) \cap C_c^\infty(\bbr^d) \qquad \mbox{and} \qquad v\in L^q(0,T; L^p(\bbr^d)) \cap C_c^\infty([0,T]\times \bbr^d).$$
 Let %$\nabla \rho^{\frac{1}{2}} \in L^2([0,T] \times \bbr^d)$ and
$\rho \in AC_2(0,T;\mathcal{P}_2(\bbr^d))$ be a solution of
\begin{equation}\label{eq 1 : A priori}
\partial_t\rho =\nabla\cdot(\nabla \rho - v \rho),\qquad \rho(0,\cdot)=\rho_0.
\end{equation}
Then we have
\begin{equation}\label{eq 2 : A priori}
W_2(\rho_s,\rho_t) \leq  C \sqrt{t-s}, \qquad \mbox{for all} \quad 0\leq s<t \leq T
\end{equation}
for some  positive constant %$C$ only depending on
$C=C\bke{T,  \int_{\bbr^d}\rho_0 \ln \rho_0 dx, \int_{\bbr^d}|x|^2 \rho_0 dx, \| v\|_{L^{q}(0,T; L^p(\bbr^d)) }}$.
%%
%%$$T, ~~ \int_{\bbr^d}\rho_0 \ln \rho_0 dx, ~~ \int_{\bbr^d}|x|^2 \rho_0 dx, ~~ \| v\|_{L^{\frac{2p}{p-d}}(0,T; L^p(\bbr^d)) } .$$
\end{lemma}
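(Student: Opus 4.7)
The strategy is to rewrite \eqref{eq 1 : A priori} as a continuity equation
\[
\partial_t \rho + \nabla \cdot(\rho \tilde v) = 0, \qquad \tilde v := v - \nabla \log \rho,
\]
which is meaningful since $\rho$ is smooth and strictly positive by the strong maximum principle applied to \eqref{eq 13 : A priori}. Once one checks $\|\tilde v(t)\|_{L^2(\rho_t)} \in L^2(0,T)$, Lemma \ref{representation of AC curves} gives $\rho \in AC_2(0,T;\calP_2(\bbr^d))$, and a standard time-rescaling (Benamou--Brenier style) yields
\[
W_2(\rho_s,\rho_t) \le \sqrt{t-s}\, \bke{\int_s^t \|\tilde v(r)\|_{L^2(\rho_r)}^2\, dr}^{1/2}.
\]
Thus the whole proof reduces to a uniform bound on $\int_0^T \|\tilde v\|_{L^2(\rho)}^2\, dt$ in terms of the quantities listed in the statement.

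Using $|\tilde v|^2 \le 2|v|^2 + 2|\nabla \log \rho|^2$, I split the integral into a drift piece and a Fisher-information piece $I(\rho):=\int |\nabla \rho|^2/\rho\, dx$. The drift piece is handled by H\"older with exponents $p'$ and $p$: $\int |v|^2 \rho \le \|v\|_{L^{2p'}}^2 \|\rho\|_{L^p}$, which after integration in time yields a bound in terms of $\|v\|_{L^2(0,T;L^{2p'})}^2$ and $\|\rho\|_{L^\infty(0,T;L^p)}$. For the Fisher information I use the entropy identity: multiplying \eqref{eq 1 : A priori} by $\log \rho + 1$ and integrating by parts gives
\[
\frac{d}{dt}\int \rho\log\rho\, dx = -I(\rho) + \int v\cdot\nabla\rho\, dx \le -\tfrac12 I(\rho) + \tfrac12 \int|v|^2\rho\, dx,
\]
where the last step is Cauchy--Schwarz followed by AM--GM (writing $\int v\cdot\nabla\rho = \int v\cdot(\nabla\log\rho)\rho$). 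Integrating over $[0,T]$ and rearranging,
\[
\tfrac12\int_0^T I(\rho)\, dt \le H(\rho_0) - H(\rho_T) + \tfrac12 \int_0^T\!\!\int |v|^2 \rho\, dx\, dt, \qquad H(\rho):=\int\rho\log\rho\, dx.
\]

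It remains to control the entropies. $H(\rho_0)$ is finite since $\rho_0 \in C_c^\infty$, while the standard Gaussian comparison $H(\rho_t) \ge -C(1+M_2(\rho_t))$ with $M_2(\rho_t) := \int |x|^2 \rho\, dx$ reduces everything to a uniform second-moment bound. The identity $\frac{d}{dt} M_2 = 2d + 2\int x \cdot v\rho\, dx$ together with $\bke{\int x\cdot v\rho\, dx}^2 \le M_2 \int |v|^2\rho$ and the above H\"older bound on $\int|v|^2\rho$ yields, via Gronwall, a uniform bound on $M_2(t)$ in terms of $M_2(\rho_0)$, $\|\rho\|_{L^\infty L^p}$ and $\|v\|_{L^2 L^{2p'}}$. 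Combining all pieces bounds $\int_0^T \|\tilde v\|_{L^2(\rho)}^2\, dt$ and hence gives $W_2(\rho_s,\rho_t) \le C\sqrt{t-s}$.

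The main obstacle I anticipate is the indefinite-sign cross term $\int v\cdot \nabla\rho$ in the entropy dissipation: absorbing half of it into $I(\rho)$ via AM--GM works cleanly only after one has secured a lower bound on $H(\rho_T)$, which in turn forces the separate second-moment estimate. The positivity of $\rho$ (needed to define $\nabla\log\rho$ pointwise) must also be justified, but this is immediate from the strong maximum principle. Everything else is standard bookkeeping.
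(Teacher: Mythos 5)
Your proposal is correct and follows essentially the same route as the paper: both rewrite the equation as a continuity equation with velocity $v-\nabla\rho/\rho$, bound $\int_0^T\|\cdot\|_{L^2(\rho_t)}^2\,dt$ by splitting into the drift part (H\"older with exponents $p,p'$) and the Fisher information (entropy identity with the cross term absorbed by Cauchy--Schwarz), control the entropy from below by the second moment, and close the second moment by Gronwall before applying the AC-curve characterization and Cauchy--Schwarz in time to get $W_2(\rho_s,\rho_t)\le C\sqrt{t-s}$. The only differences are cosmetic (your constants in the absorption step and your variant of the moment estimate), so no further comment is needed.
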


\begin{proof}
First, we estimate the second moment of $\rho$. We multiply $|x|^2$ to \eqref{eq 1 : A priori} and integrate
\begin{equation}\label{eq 4 : A priori}
\begin{aligned}
\frac{d}{dt} \int_{\bbr^d}\rho_t |x|^2 dx &= \int_{\bbr^d} |x|^2 \nabla \cdot(\nabla \rho_t-v_t \rho_t) dx
=2d \int_{\bbr^d} \rho_t dx-\int_{\bbr^d} x v_t\rho dx\\
&\leq C + \epsilon \mu \|\nabla \rho_t^{\frac{1}{2}} \|_{L^{2}}^{2} + C \|x \rho_t^{\frac{1}{2}} \|_{L^2}^2 + C \|v_t \|_{L^p}^{\frac{2p}{p-d}},
\end{aligned}
\end{equation}
where we used
\begin{equation}
\begin{aligned}
\int_{\bbr^d}|x v_t|\rho_t dx &\leq \|x \rho_t^{\frac{1}{2}} \|_{L^2} \|\rho_t^{\frac{1}{2}} \|_{L^{\frac{2p}{p-2}}} \|v_t \|_{L^p}
\leq C  \|x \rho_t^{\frac{1}{2}} \|_{L^2} \|\nabla \rho_t^{\frac{1}{2}} \|_{L^{2}}^{\frac{d}{p}} \|v_t \|_{L^p}\\
&\leq \epsilon \mu \|\nabla \rho_t^{\frac{1}{2}} \|_{L^{2}}^{2} + C \|x \rho_t^{\frac{1}{2}} \|_{L^2}^2 +C\|v_t \|_{L^p}^{\frac{2p}{p-d}}.
\end{aligned}
\end{equation}
Here $\mu \in (0,1)$ which will be specified later.
%which gives us
%\begin{equation}\label{eq 4 : A priori}
%\begin{aligned}
%\int_{\bbr^d}|x|^2 \rho_t dx &\leq e^t \int_{\bbr^d}|x|^2 \rho_0 dx + e^t\int_0^t (||v_t ||_{L^{2p'}}^2 ||\rho_t ||_{L^p}   + 2d)ds\\
%&:= C \bke{||(v ||_{L^2(0,t; L^{2p'}(\bbr^d))}, ~ \|\rho\|_{L^\infty(0,T;L^p(\bbr^d))},~\int_{\bbr^d}|x|^2 \rho_0 dx }.
%\end{aligned}
%\end{equation}

Next, we prove $\rho \in AC_2(0,T;\mathcal{P}_2(\bbr^d))$. To do this, we rewrite \eqref{eq 1 : A priori} as follows
$$\partial_t \rho +  \nabla \cdot (w \rho )=0, \qquad \mbox{where}\quad
w:= -\frac{\nabla \rho}{\rho}+ v .$$
We then show that
\begin{equation}\label{eq 5 : A priori}
\int_0^T ||w_t||_{L^2(\rho_t)}^2 dt <\infty.
\end{equation}
Once we obtain \eqref{eq 5 : A priori}, from Lemma \ref{representation of AC curves}, we are done with proving $\rho \in AC_2(0,T;\mathcal{P}_2(\bbr^d))$.
Thus, it suffices to prove \eqref{eq 5 : A priori}.
We multiply $\ln \rho$ to \eqref{eq 1 : A priori} and then integrate
\begin{equation}\label{eq 6 : A priori}
\begin{aligned}
&\frac{d}{dt}\int_{\bbr^d}\rho_t \ln \rho_t dx +\int_{\bbr^d}\frac{|\nabla \rho_t|^2}{\rho_t} dx
= \int_{\bbr^d}v_t \cdot \nabla \rho_t dx\\
&\leq \|v_t\|_{L^p} \| \nabla \rho_t^{\frac{1}{2}}\|_{L^2} \| \rho_t^{\frac{1}{2}}\|_{L^{\frac{2p}{p-2}}}
\leq c \|v_t\|_{L^p} \| \nabla \rho_t^{\frac{1}{2}}\|_{L^2}^{1+\frac{d}{p}}
\leq \epsilon  \| \nabla \rho_t^{\frac{1}{2}}\|_{L^2}^{2} + C \|v_t\|_{L^p}^{\frac{2p}{p-d}},
\end{aligned}
\end{equation}
where we used that
\begin{equation}
 \| \rho_t^{\frac{1}{2}}\|_{L^{\frac{2p}{p-2}}} \leq  \| \rho_t^{\frac{1}{2}}\|_{L^{2}}^{\frac{p-d}{p}} \| \nabla \rho_t^{\frac{1}{2}}\|_{L^2}^{\frac{d}{p}}.
\end{equation}
We note there exist constants $a,~b >0$(independent of $\rho$) such that
\begin{equation}\label{eq 9 : A priori}
-\int_{\rho(x)<1} \rho \ln \rho~ dx \leq \int_{\bbr^d}|x|^2 \rho(x) dx + a.
%,\qquad  \int_{\rho(x)>1} \rho \ln \rho~ dx \leq b\int_{\bbr^d} \rho^p dx.
\end{equation}
%%******************************************************************************
%%Let $A=\big\{\rho < e^{-|x|^2} \big\}$ and $B=\big \{ e^{-|x|^2} \leq \rho <1\big \}$. We note that
%%$$\int_{\bbr^d}\rho(\ln \rho)_- = \int_A \cdots + \int_B \cdots \leq C + \int_{\bbr^d} \rho |x|^2$$
%%*******************************************************************************
%On the other hand,
%\begin{equation}
%\int_{\bbr^d}|x v|\rho dx \leq \|x \rho^{\frac{1}{2}} \|_{L^2} \|\rho^{\frac{1}{2}} \|_{L^{\frac{2p}{p-2}}} \|v \|_{L^p}
%\leq C  \|x \rho^{\frac{1}{2}} \|_{L^2} \|\nabla \rho^{\frac{1}{2}} \|_{L^{2}}^{\frac{d}{p}} \|v \|_{L^p}\\
%\leq \epsilon \mu \|\nabla \rho^{\frac{1}{2}} \|_{L^{2}}^{2} + C \|x \rho^{\frac{1}{2}} \|_{L^2}^2 +C\|v \|_{L^p}^{\frac{2p}{p-d}},
%\end{equation}
%where $\mu \in (0,1)$ which will be specified later. Thus we estimate
%\begin{equation}
%\begin{aligned}
%\frac{d}{dt} \int_{\bbr^d}\rho |x|^2 dx &= \int_{\bbr^d} |x|^2 \nabla \cdot(\nabla \rho-v \rho) dx =2d \int_{\bbr^d} \rho dx-\int_{\bbr^d} x v\rho dx\\
%&\leq C + \epsilon \mu \|\nabla \rho^{\frac{1}{2}} \|_{L^{2}}^{2} + C \|x \rho^{\frac{1}{2}} \|_{L^2}^2 + C \|v \|_{L^p}^{\frac{2p}{p-d}}.
%\end{aligned}
%\end{equation}
Combining \eqref{eq 4 : A priori} and \eqref{eq 6 : A priori}, we have
\[
\frac{d}{dt} \Big(\mu \int_{\bbr^d} \rho_t \ln \rho_t dx + \int_{\bbr^d} |x|^2 \rho_t dx\Big)
+\mu(1-2\epsilon) \int_{\bbr^d}\frac{|\nabla \rho_t|^2}{\rho_t}
\]
\begin{equation}\label{eq 14 : A priori}
\leq C + C \int_{\bbr^d} |x|^2 \rho_t dx + C \|v_t \|_{L^p}^{\frac{2p}{p-d}}.
\end{equation}
On the other hand,
\begin{equation}\label{eq 15 : A priori}
\begin{aligned}
\int_{\bbr^d} |v_t|^2 \rho_t dx \leq \| v_t\|_{L^p}^2 \|\rho_t^{\frac{1}{2}}\|_{L^{\frac{2p}{p-2}}}^2
\leq \| v_t\|_{L^p}^2 \|\nabla \rho_t^{\frac{1}{2}}\|_{L^2}^{\frac{2d}{p}}
\leq C \| v_t\|_{L^p}^{\frac{2p}{p-d}} + \mu \epsilon |\nabla \rho_t^{\frac{1}{2}}\|_{L^2}^2
\end{aligned}
\end{equation}
From \eqref{eq 14 : A priori} and \eqref{eq 15 : A priori}, we get
\[
\frac{d}{dt} \Big(\mu \int_{\bbr^d} \rho_t \ln \rho_t dx + \int_{\bbr^d} |x|^2 \rho_t dx\Big)
+\mu(1-3\epsilon) \int_{\bbr^d}\frac{|\nabla \rho_t|^2}{\rho_t}
+\int_{\bbr^d} |v_t|^2 \rho dx
\]
\begin{equation}\label{eq 16 : A priori}
\leq C + C \int_{\bbr^d} |x|^2 \rho_t dx + C \|v_t \|_{L^p}^{\frac{2p}{p-d}}.
\end{equation}
Integrating in time, we have
\begin{equation*}
\begin{aligned}
\Big(\mu \int_{\bbr^d} \rho_t \ln \rho_t dx + \int_{\bbr^d} |x|^2 \rho_t dx\Big)
+\mu(1-3\epsilon) \int_0^t \int_{\bbr^d}\frac{|\nabla \rho_\tau|^2}{\rho_\tau} dx~ d\tau + \int_0^t \int_{\bbr^d}|v_\tau|^2 \rho_\tau dx~ d\tau\\
\leq \Big(\mu \int_{\bbr^d} \rho_0 \ln \rho_0 dx + \int_{\bbr^d} |x|^2 \rho_0 dx\Big)+  Ct + C \int_0^t\int_{\bbr^d} |x|^2 \rho_\tau dx ~d\tau
+ C \int_0^t \|v_\tau \|_{L^p}^{\frac{2p}{p-d}} d\tau.
\end{aligned}
\end{equation*}
We denote
$$
f(t)=\mu \int_{\bbr^d}\rho_t (\ln \rho_t)_+ dx + \int_{\bbr^d}|x|^2 \rho_t dx ,
$$
$$
g(t)=\mu(1-3\epsilon) \int_0^t \int_{\bbr^d}\frac{|\nabla \rho_\tau|^2}{\rho_\tau} dx~ d\tau + \int_0^t \int_{\bbr^d}|v_\tau|^2 \rho_\tau dx~ d\tau.
$$
 Then
\begin{equation}
\begin{aligned}
f(t) +g(t) &\leq f(0) +Ct +C \int_0^t \int_{\bbr^d} |x|^2 \rho_\tau dx~ d\tau +C\int_0^t \|v_\tau \|_{L^p}^{\frac{2p}{p-d}}~d\tau
+ \mu \int_{\bbr^d}\rho_t (\ln \rho_t)_-\\
&\leq f(0) +Ct +C \int_0^t f(\tau) d\tau  +C\int_0^t \|v_\tau \|_{L^p}^{\frac{2p}{p-d}} ~d\tau + \mu \int_{\bbr^d}\rho_t(\ln \rho_t)_-~dx\\
&\leq  f(0) +C(t+1) +C \int_0^t f(\tau) d\tau  +C\int_0^t \|v_\tau \|_{L^p}^{\frac{2p}{p-d}}d\tau + \mu \int_{\bbr^d}|x|^2\rho_t dx\\
&\leq  f(0) +C(t+1) +C \int_0^t f(\tau) d\tau  +C\int_0^t \|v_\tau \|_{L^p}^{\frac{2p}{p-d}} d\tau+ \mu f(t) .
\end{aligned}
\end{equation}
Taking $\mu$ sufficiently small, it follows that
\begin{equation}
f(t) +g(t) \leq  Cf(0) +C(t+1) +C \int_0^t f(\tau) d\tau  +C\int_0^t \|v \|_{L^p}^{\frac{2p}{p-d}}. %\leq C(t+1) +C \int_0^\tau f(\tau) d\tau.
\end{equation}
Gronwall's inequality implies that
\begin{equation}
f(t) +g(t) \leq \big(1+te^{Ct} \big)\Big( f(0) +C(t+1)+C \int_0^t  \|v_\tau \|_{L^p}^{\frac{2p}{p-d}}d\tau\Big).
\end{equation}
Therefore, we obtain
\begin{equation}
\int_{\bbr^d}\rho_t (\ln \rho_t)_+ dx + \int_{\bbr^d}|x|^2 \rho_t dx
+ \int_0^t \int_{\bbr^d}\frac{|\nabla \rho_\tau|^2}{\rho_\tau} dx~ d\tau + \int_0^t \int_{\bbr^d}|v_\tau|^2 \rho_\tau dx~ d\tau\leq C=C(t),
\end{equation}
where $C(t)$ depends on
$$t, ~~ \int_{\bbr^d}\rho_0 \ln \rho_0 dx, ~~ \int_{\bbr^d}|x|^2 \rho_0 dx, ~~ \int_0^t  \|v_\tau \|_{L^p}^{\frac{2p}{p-d}} d\tau .$$
It follows that
\begin{equation}
\int_{\bbr^d}\rho_t (\ln \rho_t)_- \leq C=C(t),
\end{equation}
which leads to
\begin{equation}\label{eq 17 : A priori}
\int_{\bbr^d}\rho_t| \ln \rho_t|dx + \int_{\bbr^d}|x|^2 \rho_t dx
 + \int_0^t \int_{\bbr^d} \Big (\frac{|\nabla \rho_\tau|^2}{\rho_\tau} + |v_\tau|^2 \rho_\tau \Big) dx d\tau\leq C=C(t).
\end{equation}
%
%===================================================================================
%\begin{equation}\label{eq 7 : A priori}
%\begin{aligned}
%\int_{\bbr^d}\rho_t \ln \rho_t dx + \frac{1}{2}\int_0^t \int_{\bbr^d}\frac{|\nabla \rho_s|^2}{\rho_s} dx ds
%&\leq 2\int_0^t ||v_s||_{L^{2p'}}^2\| \rho_s\|_{L^p} ds + \int_{\bbr^d}\rho_0 \ln \rho_0 dx  \\
%&\leq 2 \| \rho\|_{L^\infty(0,T;L^p)}\int_0^t ||v_s||_{L^{2p'}}^2 ds + \int_{\bbr^d}\rho_0 \ln \rho_0 dx.
%\end{aligned}
%\end{equation}
%Hence, we have
%\begin{equation}\label{eq 8 : A priori}
%\int_0^t \int_{\bbr^d}\frac{|\nabla \rho_s|^2}{\rho_s} dx ds
%\leq 2 \| \rho\|_{L^\infty(0,T;L^p)}\int_0^t ||v_s||_{L^{2p'}}^2 ds +2 (\int_{\bbr^d}\rho_0 \ln \rho_0 dx
%- \int_{\bbr^d}\rho_t \ln \rho_t dx).
%\end{equation}
%
%Combining \eqref{eq 8 : A priori} and \eqref{eq 9 : A priori}, we have
%\[
%\int_0^t \int_{\bbr^d}\frac{|\nabla \rho_s|^2}{\rho_s} dx ds
%\leq 2 \| \rho\|_{L^\infty(0,T;L^p)}\int_0^t ||v_s||_{L^{2p'}}^2 ds
%\]
%\begin{equation}\label{eq 10 : A priori}
%+2 \bke{ b\| \rho_0\|_{L^p(\bbr^d)}^p
%+\int_{\bbr^d}|x|^2\rho_t(x) dx + a }.
%\end{equation}
Finally, for any $t\in[0,T]$, we have
\begin{equation}\label{eq 11 : A priori}
\begin{aligned}
\int_0^t ||w_\tau||_{L^2(\rho_\tau)}^2 d\tau &\leq 2 \int_0^t \int_{\bbr^d}
\frac{|\nabla \rho_\tau|^2}{\rho_\tau} dx d\tau + 2 \int_0^t \int_{\bbr^d} |v_\tau|^2\rho_\tau dx d\tau \\
&\leq C(T).
\end{aligned}
\end{equation}
Due to \eqref{eq 17 : A priori}, the constant $C(T)$ in \eqref{eq 11 : A priori}
 only depends on
%$$ ||v ||_{L^2(0,T; L^{2p'}(\bbr^d))}, ~~  \| \rho\|_{L^\infty(0,T;L^p(\bbr^d))} , ~~ \int_{\bbr^d}|x|^2\rho_0(x) dx.$$
$$T, ~~ \int_{\bbr^d}\rho_0 \ln \rho_0 dx, ~~ \int_{\bbr^d}|x|^2 \rho_0 dx, ~~ \int_0^T  \|v_t \|_{L^p}^{\frac{2p}{p-d}} dt.$$
Hence, we have
\begin{equation}\label{eq 12 : A priori}
\begin{aligned}
W_2(\rho(s),\rho(t)) &\leq \int_s^t  ||w_\tau||_{L^2(\rho_\tau)}
d\tau \leq  C(T) \sqrt{t-s},
\end{aligned}
\end{equation}
which concludes \eqref{eq 2 : A priori}.
\end{proof}
Now we present the proof of Theorem \ref{Prop-1}.
\\
\begin{pfthm-1}
 Suppose $v \in  L^q(0,T; L^p(\bbr^d))$ and $\rho_0\in  \mathcal{P}_2(\bbr^d)$ with $\int_{\bbr^d} \rho_0 \ln \rho_0 dx < \infty$.
By exploiting truncation and mollification, we may choose a sequence of vector fields $v^n\in C_c^\infty([0,T]\times \bbr^d)) $
such that
\begin{equation}\label{eq7 : Theorem-1}
\lim_{n \rightarrow \infty}||v^n - v ||_{L^q(0,T; L^p(\bbr^d))} =0.
\end{equation}
Using truncation, mollification and normalization in a similar way, we choose a sequence of functions
$\rho_0^n\in C_c^\infty(\bbr^d)\cap \mathcal{P}_2(\bbr^d)$ satisfying
\begin{equation}\label{eq9 : Theorem-1}
\lim_{n \rightarrow \infty} W_2(\rho_0^n , \rho_0) =0,
\quad  \lim_{n \rightarrow \infty} \int_{\bbr^d} \rho_0^n \ln \rho_0^n dx = \int_{\bbr^d} \rho_0 \ln \rho_0 dx.
\end{equation}
From Theorem of \cite{KK}, we have $\rho^n \in AC_2(0,T; \mathcal{P}_2(\bbr^d))$ which
is a solution of
\begin{equation}\label{eq5 : Theorem-1}
\left\{\begin{matrix}
   & \partial_t \rho^n =   \nabla \cdot( \nabla \rho^n  - v^n \rho^n)\\
   & \rho^n(0,\cdot)=\rho_0^n,
    \end{matrix}
    \right.
\end{equation}
that is,
\begin{equation}\label{eq21 : Theorem-1}
%\begin{aligned}
\int_{0}^{T} \int_{\bbr^d}  \left\{\partial_t\varphi(t,x)
 + \Delta \varphi(t,x)+ \nabla \varphi(t,x) \cdot v^n (t,x)\right\}\rho^n(t,x)
 =  - \int_{\bbr^d}\varphi(0,x) \rho_0^n(x),
%\end{aligned}
\end{equation}
for every $\varphi\in C_c^\infty([0,T)\times \bbr^d)$.

\noindent {\it Step 1.}  We claim that curves $\rho^n :[0,T]  \mapsto \mathcal{P}_2(\bbr^d)$ are equi-continuous.\\
%\begin{itemize} Case I. $(1<p<\frac{d}{d-2}):$ \\
First, %if $\alpha=2p'>d$ then
we exploit Lemma \ref{Lemma : a priori 2} and get
\begin{equation}
W_2(\rho^n_s,\rho^n_t) \leq  C_n \sqrt{t-s}
\end{equation}
where $C_n$ only depending on
\begin{equation}
T, ~~ \int_{\bbr^d}\rho_0^n \ln \rho_0^n dx, ~~ \int_{\bbr^d}|x|^2 \rho_0^n dx, ~~ \int_0^T  \|v_t^n \|_{L^p}^{\frac{2p}{p-d}} dt
\end{equation}
Due to \eqref{eq7 : Theorem-1} and \eqref{eq9 : Theorem-1},  %$C_n$ only depends on
%\begin{equation}
%T, ~~ \int_{\bbr^d}\rho_0 \ln \rho_0 dx, ~~ \int_{\bbr^d}|x|^2 \rho_0 dx, ~~ \int_0^T  \|v_t \|_{L^p}^{\frac{2p}{p-d}} dt
%\end{equation}
%Finally, due to the property \eqref{eq7 : Theorem-1} and \eqref{eq9 : Theorem-1},
we conclude
\begin{equation}\label{eq 16 : A priori}
W_2(\rho^n_s,\rho^n_t) \leq  C \sqrt{t-s} \qquad \mbox{for all sufficiently large} ~~n,
\end{equation}
where $C$ only depending on
\begin{equation}
T, ~~ \int_{\bbr^d}\rho_0 \ln \rho_0 dx, ~~ \int_{\bbr^d}|x|^2 \rho_0 dx, ~~ \int_0^T  \|v_t \|_{L^p}^{\frac{2p}{p-d}} dt
\end{equation}

\noindent {\it Step 2.} Estimation \eqref{eq 16 : A priori} says that curves $\rho^n :[0,T]  \mapsto \mathcal{P}_2(\bbr^d)$ are equi-continuous and hence there
exists a curve $\rho:[0,T] \mapsto \mathcal{P}_2(\bbr^d)$ such that , as $n \rightarrow \infty$(up to subsequence)
\begin{equation}\label{eq33 : Theorem-1}
\rho^n(t) ~~\mbox{weak* converges to}~~ \rho(t) \quad \mbox{and} \quad W_2(\rho(s),\rho(t)) \leq   C \sqrt{t-s}, \qquad \forall ~ 0\leq s\leq t\leq T.
\end{equation}
Due to \eqref{eq 17 : A priori}, we note that the uniform entropy bound on $\rho^n$ implies
$\rho(t) \in \mathcal{P}_2^{ac}(\bbr^d)$ for all $t\in[0,T]$. Furthermore, the convergence in \eqref{eq7 : Theorem-1}
gives us
\begin{equation}\label{eq32 : Theorem-1}
\begin{aligned}
\int_{0}^{T} \int_{\bbr^d}  (\nabla \varphi \cdot v^n ) \rho^n dxdt
\longrightarrow  \int_{0}^{T} \int_{\bbr^d}  (\nabla \varphi \cdot v) \rho dxdt, ~~~\mbox{as}~~ n\rightarrow \infty,
\end{aligned}
\end{equation}
for any $\varphi \in C_c^\infty([0,T)\times \bbr^d) $.
Indeed, we have
\begin{equation}
\begin{aligned}
&\int_{0}^{T} \int_{\bbr^d}  \big[\nabla \varphi \cdot v^n \rho^n
-  \nabla \varphi \cdot v \rho\big ] dxdt \\
& = \int_{0}^{T} \int_{\bbr^d}  \nabla \varphi \cdot \big (v^n- v\big )\rho^n  dxdt
+ \int_{0}^{T} \int_{\bbr^d}  \nabla \varphi \cdot  \bke{v\rho^n- v\rho} ~ dxdt \\
& = I + II,
\end{aligned}
\end{equation}
where
\begin{equation}\label{eq34 : Theorem-1}
\begin{aligned}
| I |&\leq \|\nabla \varphi \|_{L^\infty[0,T)\times \bbr^d} \int_{0}^{T} \int_{\bbr^d} |v^n - v | \rho^n ~dx dt\\
%&\leq \|\nabla \varphi \|_{L^\infty[0,T)\times \bbr^d} \int_{0}^{T} \Big (\int_{\bbr^d} |v^n - v |^2 \rho^n ~dx \Big)^{\frac{1}{2}} dt\\
&\leq T\|\nabla \varphi \|_{L^\infty[0,T)\times \bbr^d}  \int_{0}^{T} \int_{\bbr^d} |v^n - v |^2 \rho^n ~dx  dt.
\end{aligned}
\end{equation}
Due to \eqref{eq 15 : A priori}, we note that
\begin{equation}\label{eq35 : Theorem-1}
\begin{aligned}
 \int_{0}^{T} \int_{\bbr^d} |v^n - v |^2 \rho^n ~dx  dt &\leq \int_0^T \|v^n-v\|_{L^p}^2 \| \nabla (\rho^n)^\frac{1}{2} \|_{L^2}^\frac{2d}{p} dt\\
 &\leq \Big (\int_0^T \|v^n-v\|_{L^p}^{\frac{2p}{p-d}} \Big)^{\frac{p-d}{p}} \Big( \int_0^T \| \nabla (\rho^n)^\frac{1}{2} \|_{L^2}^2\Big)^{\frac{d}{p}}\\
 &\leq C \|v^n-v \|_{L^{\frac{2p}{p-d}}(0,T ; L^p(\bbr^d))}^2
\end{aligned}
\end{equation}
where the last inequality follows from \eqref{eq 17 : A priori}. Combining \eqref{eq34 : Theorem-1} and \eqref{eq35 : Theorem-1}, we have
\begin{equation}
| I |\leq C \|v^n-v \|_{L^{\frac{2p}{p-d}}(0,T ; L^p(\bbr^d))}^2 \longrightarrow 0
\end{equation}
as $n\rightarrow \infty$ due to \eqref{eq7 : Theorem-1}. Also, weak* convergence in \eqref{eq33 : Theorem-1} implies that
$II$ converges to $0$ as $n \rightarrow 0$.
We plug \eqref{eq33 : Theorem-1} and \eqref{eq32 : Theorem-1} into \eqref{eq21 : Theorem-1}, and get
\begin{equation}\label{eq22 : Theorem-1}
\begin{aligned}
\int_{0}^{T} \int_{\bbr^d}  \left\{\partial_t\varphi(t,x)
 + \Delta \varphi(t,x)+ \nabla \varphi(t,x) \cdot v(t,x)\right\}\rho (t,x)dxdt =  - \int_{\bbr^d}\varphi(0,x) \rho_0(x)dx,
\end{aligned}
\end{equation}
for each $\varphi\in C_c^\infty([0,T)\times \bbr^d)$. This implies that $\rho$ solves
\begin{equation}\label{eq8 : Theorem-1}
\left\{\begin{matrix}
   & \partial_t \rho =   \nabla \cdot( \nabla \rho  - v\rho)\\
   & \rho(\cdot,0)=\rho_0.
    \end{matrix}
    \right.
\end{equation}

\noindent {\it Step 3.}  If $\rho_0 \in L^\alpha(\bbr^d)$ for $\alpha>1$ then, in addition to \eqref{eq9 : Theorem-1}, we may choose $\rho_0^n$ satisfying
\begin{equation}
\lim_{n\rightarrow \infty} \|\rho_0^n-\rho_0 \|_{L^\alpha(\bbr^d)} =0.
\end{equation}
From \eqref{eq 170 : A priori}, we have
\begin{equation}\label{eq10 : Theorem-1}
\sup_{t\in[0,T]}\|\rho^n(t)\|_{L^\alpha }^\alpha \leq \|\rho_0^n\|_{L^\alpha}^\alpha e^{C\int_0^T \|v^n(\tau)\|_{L^p}^q d\tau}.
\end{equation}
Due to the lower semicontinuity of $L^\alpha$-norm with respect to the weak* convergence, we take $n \rightarrow \infty$ in \eqref{eq10 : Theorem-1} and get
\begin{equation}
\sup_{t\in[0,T]}\|\rho(t)\|_{L^\alpha }^\alpha \leq \|\rho_0\|_{L^\alpha}^\alpha e^{C\int_0^T \|v(\tau)\|_{L^p}^q d\tau},
\end{equation}
which completes the proof.
\end{pfthm-1}

%%%%%%%%%%%%%%%%%%%%%%%%%%%%%%%%%%%%%%%%%%%%%%%%%%%%%%%%%%%%%%%%%%%%%%%%%%%%%%
%%%%%%%%%%%%%%%%%%%%%%%%%%%%%%%%%%%%%%%%%%%%%%%%%%%%%%%%%%%%%%%%%%%%%%%%%%%%%%
%%%%%%%%%%%%%%%%%%%%%%%%%%%%%%%%%%%%%%%%%%%%%%%%%%%%%%%%%%%%%%%%%%%%%%%%%%%%%%

\section{Proof of Theorem \ref{Theorem-weaksolution} }

Reminding function spaces $X_a$ and  $Y_a$ defined in
\eqref{KK-May7-120}--\eqref{KK-May7-130}, we denote $W:=X_a\times
Y_a\times Y_a$. Let $M>0$ be a positive number and we introduce
$W^M:=X^M_a\times Y^M_a\times Y^M_a$, where
\[
X^M_a:=\{f\in X_a|\norm{f}_{X_a}\le M\},
\qquad
Y^M_a:=\{f\in Y_a|\norm{f}_{Y_a}\le M\}.
\]
For convenience, for $a>d/2$ we denote
\begin{equation}\label{KK-May24-100}
\eta:=1-\frac{d}{2a}=\frac{2a-d}{2a}.
\end{equation}
To construct solutions, we will use the method of iterations.
Setting $\rho_1(x,t):=\rho_0(x)$,  $c_1(x,t):=c_0(x)$ and $u_1(x,t):=u_0(x)$, we
consider the following: $k=1, 2,\cdots$
\begin{equation}\label{KK-Nov28-1}
\partial_t \rho_{k+1} =   \nabla
\cdot\bke{ \nabla \rho_{k+1}  -u_k\rho_{k+1}-\chi(c_k) \nabla
 c_{k+1}\rho_{k+1}},\qquad \rho_{k+1}(\cdot,0)=\rho_0,
\end{equation}
\begin{equation}\label{KK-Nov28-5}
\partial_t c_{k+1} -\Delta c_{k+1}=-u_k\nabla c_{k+1}  -\kappa(c_k) \rho_{k},\qquad
c_{k+1}(\cdot,0)=c_0,
\end{equation}
\begin{equation}\label{KK-Nov28-10}
\partial_t u_{k+1} -\Delta u_{k+1}+\nabla p_{k+1}=-u_k\nabla u_k  -
\rho_{k+1}\nabla\phi,\quad{{\rm div}\, u_{k+1}}=0\quad
u_{k+1}(\cdot,0)=u_0.
\end{equation}
Here for given $M>0$ we assume that $\rho_0$ and
$c_0$ are non-negative and
\begin{equation}\label{KK-march8-200}
\norm{\rho_0}_{(L^1\cap L^{a})(\R^d)}<\frac{M}{6},\qquad
\norm{c_0}_{W^{2,a}(\R^d)}<\frac{M}{6},\qquad
\norm{u_0}_{W^{2,a}(\R^d)}<\frac{M}{6}.
\end{equation}
We note, due the maximum principle, that $c_k$
is uniformly bounded, i.e. $\norm{c_k(\cdot,
t)}_{L^{\infty}(\R^d)}\le \norm{c_0}_{L^{\infty}(\R^d)}$.
Now we are ready to present the proof of Theorem \ref{Theorem-weaksolution}.
\\
\begin{pfthm-2}
In case that $a>d$ is rather easy. From now on, we consider only the case that $d/2<a\le d$.
For simplicity, we assume that $\int_{\bbr^d}\rho_0(x) dx
=1$ (if not, we replace $\rho_0$ by $\rho_0/\int_{\R^d}\rho_0 dx$, which doesn't yield any crucial change for the local existence of solutions). Under the hypothesis that $(\rho_k, c_k, u_k)\in W^M$, we first
show that $(\rho_{k+1}, c_{k+1}, u_{k+1})\in W^M$ for sufficiently
small $T$, which will be specified later.

First, recalling the equation \eqref{KK-Nov28-1} and using the estimate  \eqref{KK-May7-50},
%\footnote{KK: We need to show \eqref{KK-May7-50} In the proof of Theorem 1.}
in particular the case that
$(\alpha, \beta)=(2a, \tilde{a})$, i.e. $\tilde{a}=4a/(2a-d)$
we obtain for all $t\in[0,T]$
\[
||\rho_{k+1}(t)||_{L^a(\bbr^d)} \leq
||\rho_0||_{L^a(\bbr^d)}\exp\bke{C(\norm{u_k+\chi(c_k) \nabla c_{k+1}(\tau) }_{L^{\tilde{a}}_tL^{2a}_x })^{\frac{4a}{2a-d}}}
\]
\[
\leq ||\rho_0||_{L^a(\bbr^d)}\exp\bke{C(\norm{u_k}_{L^{\tilde{a}}_tL^{2a}_x }+
\norm{\nabla c_{k+1}}_{L^{\tilde{a}}_tL^{2a}_x })^{\frac{4a}{2a-d}} }.
\]
We note, due to \eqref{heat-est10}, that
\[
\norm{\nabla c_{k+1}}_{L^{\tilde{a}}_tL^{2a}_x }\le C\norm{u_kc_{k+1}}_{L^{\tilde{a}}_tL^{2a}_x }+Ct^{\frac{1}{2}\bke{1-\frac{d}{2a}}}\norm{\rho_k}_{L^{\tilde{a}}_tL^{a}_x }
+Ct^{\frac{2a-d}{4a}}\norm{\nabla c_0}_{L^{2a}_x }
\]
\[
\le C\norm{u_k}_{L^{\tilde{a}}_tL^{2a}_x }+Ct^{\frac{1}{2}\bke{1-\frac{d}{2a}}}\norm{\rho_k}_{L^{\tilde{a}}_tL^{a}_x }
+Ct^{\frac{2a-d}{4a}}\norm{\nabla c_0}_{L^{2a}_x }
\]
\begin{equation}\label{18Nov3-300}
\le Ct^{1-\frac{d}{2b}}\norm{u_k}_{Y_b(Q_t)} +Ct^{\frac{3}{2}\bke{1-\frac{d}{2a}}}\norm{\rho_k}_{L^{\infty}_tL^{a}_x }
+Ct^{\frac{1}{2}\bke{1-\frac{d}{2a}}}\bke{\norm{u_0}_{L^{2a}_x }+\norm{\nabla c_0}_{L^{2a}_x }},
\end{equation}
where we used \eqref{Nov2-10} and $\norm{c_{k+1}(t)}_{L^{\infty}}\le \norm{c_0}_{L^{\infty}}$.
Thus, summing up the estimates, we have
\[
\norm{\rho_{k+1}(t)}_{L^a(\bbr^d)} \leq ||\rho_0||_{L^a(\bbr^d)}\times
\]
\[
\times \exp\bke{CT^{\eta}\norm{u_k}_{Y_a(Q_t)} +CT^{\frac{3\eta}{2}}\norm{\rho_k}_{X_a(Q_t)}
+CT^{\frac{\eta}{2}}\bke{\norm{u_0}_{L^{2a}_x }+\norm{\nabla c_0}_{L^{2a}_x }}}.
\]
Since $\norm{u_k}_{Y_a(Q_t)}\le M$, $|| \rho_k ||_{X_a(Q_T)}\le M$ and
$||\rho_0||_{L^a(\bbr^d)}<M/2$, by taking a sufficiently
small $T>0$ we obtain
\begin{equation}\label{KK-May26-10}
||\rho_{k+1}||_{X_a(Q_T)}\le M.
\end{equation}

Next, we consider the equation \eqref{KK-Nov28-10}.
Let $p$ with $a<p<\tilde{a}=ad/(d-a)$ with $a>d/2$ and $q$ with $d/p+2/q=d/a$.
We then define numbers $\tilde{p}$ and $\tilde{q}$ by $1/\tilde{p}+1/p=1/a$ and
$1/\tilde{q}+1/q=1/2$, respectively. We note that $d/\tilde{p}+2/\tilde{q}=1$ and $\tilde{p}>d$.
We then compute that
\[
\norm{u_k\nabla u_k}_{L^2_tL^a_x(Q_T)}\le
\norm{\norm{u_k}_{L^{\tilde{p}}_x}\norm{\nabla
u_k}_{L^p_x}}_{L^2_t}\le
\norm{u_k}_{L^{\tilde{q}}_t L^{\tilde{p}}_x}\norm{\nabla
u_k}_{L^q_tL^p_x}
\]
\[
\le C\bke{T^{\eta}\norm{u_k}_{Y_a(Q_t)}  +T^{\frac{1}{\tilde{q}}}\norm{u_0}_{L^{\tilde{p}}(\R^d)}}
\bke{\norm{u_k}_{Y_a(Q_t)}  +T^{\frac{1}{q}}\norm{\nabla u_0}_{L^{p}(\R^d)}}
\]
\begin{equation}\label{KK-April20-10}
\le
C\bke{T^{\eta}\norm{u_k}_{Y_a(Q_t)}  +(T^{\eta+\frac{1}{q}}+T^{\frac{1}{\tilde{q}}})\norm{u_0}_{W^{2,a}(\R^d)} }
\norm{u_k}_{Y_a(Q_t)}  +CT^{\frac{1}{2}}\norm{u_0}^2_{W^{2,a}(\R^d)}.
\end{equation}
Via maximal regularity \eqref{KK-June-10000} of the Stokes system and
\eqref{KK-April20-10}, we have for all $k\geq 1$
\[
\norm{\partial_t u_{k+1}}_{L^2_tL^a_x(Q_T)}+\norm{\nabla^2
u_{k+1}}_{L^2_tL^a_x(Q_T)}+\norm{\nabla p_{k+1}}_{L^2_tL^a_x(Q_T)}
\]
\[
\le C\bke{\norm{u_k\nabla
u_k}_{L^2_tL^a_x(Q_T)}+\norm{\rho_{k+1}\nabla\phi}_{L^2_tL^a_x(Q_T)}+T^{\frac{1}{2}}\norm{u_0}_{W^{2,a}(\R^d)}}
\]
\[
\le
C\bke{T^{\eta}\norm{u_k}_{Y_a(Q_t)}  +(T^{\eta+\frac{1}{q}}+T^{\frac{1}{\tilde{q}}})\norm{u_0}_{W^{2,a}(\R^d)} }
\norm{u_k}_{Y_a(Q_t)}
\]
\begin{equation}\label{KK-Nov28-100}
+CT^{\frac{1}{2}}\norm{\rho_{k+1}}_{X_a(Q_T)} +C( T^{\frac{1}{2}}\norm{u_0}_{W^{2,a}(\R^d)}+T^{\frac{1}{2}})\norm{u_0}_{W^{2,a}(\R^d)},
\end{equation}
where we used that $\norm{\nabla\phi}_{L^{\infty}(\R^d)}<C$.

For the
control of lower order derivative of $u_{k+1}$, via potential
estimate, we can also have
\[
\norm{u_{k+1}}_{L^2_tL^a_x(Q_T)}\le
C\bke{(T^{\frac{1}{2}}+T)\norm{u_k}^2_{Y_a(Q_t)}  +T^{\frac{1}{2}}\norm{u_0}^2_{W^2_a(Q_t)}}
\]
\begin{equation}\label{KK-June-10}
+CT^{\frac{3}{2}}\norm{\rho_{k+1}}_{X_a(Q_T)}+CT^{\frac{1}{2}}\norm{u_0}_{L^{a}(\R^d)}.
\end{equation}
Thus, combining estimates \eqref{KK-May26-10}, \eqref{KK-Nov28-100} and
\eqref{KK-June-10}, and taking $T$ sufficiently small, we note that
\begin{equation}\label{KK-May26-20}
||u_{k+1}||_{Y_a(Q_T)}\le M.
\end{equation}
On the other hand, similarly as in \eqref{KK-April20-10}, we note
that
\[
\norm{u_k\nabla c_{k+1}}_{L^2_tL^a_x(Q_T)}\le
\norm{\norm{u_k}_{L^{\tilde{p}}_x}\norm{\nabla
c_{k+1}}_{L^p_x}}_{L^2_t}\le
\norm{u_k}_{L^{\tilde{q}}_xL^{\tilde{p}}_x}\norm{\nabla
c_{k+1}}_{L^q_tL^p_x}
\]
\[
\le C\bke{T^{\eta}\norm{u_k}_{Y_a(Q_t)}  +T^{\frac{1}{\tilde{q}}}\norm{u_0}_{L^{\tilde{p}}(\R^d)}}
\bke{\norm{c_{k+1}}_{Y_a(Q_t)}  +T^{\frac{1}{q}}\norm{\nabla c_0}_{L^{p}(\R^d)}}
\]
\[
\le
C\bke{T^{\eta}\norm{u_k}_{Y_a(Q_t)}  +T^{\frac{1}{\tilde{q}}}\norm{u_0}_{L^{\tilde{p}}(\R^d)}}
\norm{c_{k+1}}_{Y_a(Q_t)}
\]
\begin{equation}\label{KK-Nov3-10}
+CT^{\eta+\frac{1}{q}}\norm{u_k}_{Y_a(Q_t)}\norm{\nabla c_0}_{L^{p}(\R^d)}  +CT^{\frac{1}{2}}\norm{u_0}_{L^{\tilde{p}}(\R^d)}\norm{\nabla c_0}_{L^{p}(\R^d)}.
\end{equation}
Using  maximal regularity \eqref{KK-June-1000} for the equation \eqref{KK-Nov28-5} and the estimate
\eqref{KK-Nov3-10}, we have
\[
\norm{\partial_t c_{k+1}}_{L^2_tL^a_x(Q_T)}+\norm{\nabla^2
c_{k+1}}_{L^2_tL^a_x(Q_T)}
\]
\[
\le C\bke{\norm{u_k\nabla c_{k+1}}_{L^2_tL^a_x(Q_T)}
+\norm{\kappa(c_k) \rho_{k}}_{L^2_tL^a_x(Q_T)}+T^{\frac{1}{2}}\norm{c_0}_{W^{2,a}(\R^d)}}
\]
\[
\le
CT^{\frac{1}{2}}\bke{T^{\eta}\norm{u_k}_{Y_a(Q_t)}+\norm{u_0}_{W^{2,a}(\R^d)}}
\norm{c_{k+1}}_{Y_a(Q_t)}
+CT^{\frac{1}{2}+\eta}\norm{c_0}_{W^{2,a}(\R^d)}\norm{u_k}_{Y_a(Q_t)}
\]
\begin{equation}\label{KK-Nov28-101}
+CT^{\frac{1}{2}}\norm{\rho_{k}}_{X_a}+C( T^{\frac{1}{2}}\norm{u_0}_{W^{2,a}(\R^d)}+T^{\frac{1}{2}})\norm{c_0}_{W^{2,a}(\R^d)},
\end{equation}
where we used that
$\norm{\kappa(c_k)}_{L^{\infty}(\R^d)}<C=C(\norm{c_0}_{L^{\infty}})$
and $\eta$ is the number defined in \eqref{KK-May24-100}.

Since
estimates of lower order derivative of $c_{k+1}$ can be obtained as
in \eqref{KK-June-10}, we skip its details (from now on the estimate
of lower order derivatives are omitted, unless it is necessary to be
specified). Taking $T$ small enough such that
\[
T^{\frac{1}{2}}\bke{T^{\eta}\norm{u_k}_{Y_a(Q_t)}+\norm{u_0}_{W^{2,a}(\R^d)}}\le
CT^{\frac{1}{2}}\bke{T^{\eta}M+\norm{u_0}_{W^{2,a}(\R^d)}}\le
\frac{1}{2},
\]
the estimate \eqref{KK-Nov28-101} becomes
\begin{equation*}
\norm{c_{k+1}}_{Y_a}\le
 C\bke{T^{\frac{1}{2}+\eta}\norm{c_0}_{W^{2,a}(\R^d)}\norm{u_k}_{Y_a(Q_t)}
+T^{\frac{1}{2}}\norm{\rho_{k}}_{X_a}+(  T^{\frac{1}{2}}\norm{u_0}_{W^{2,a}(\R^d)}+ T^{\frac{1}{2}})\norm{c_0}_{W^{2,a}(\R^d)}},
\end{equation*}
which yields again for a sufficiently small $T$
\begin{equation}\label{KK-May26-30}
\norm{c_{k+1}}_{Y_a}\le {\color{red}M.}%\frac{M}{3}.
\end{equation}

Next, we show that this iteration gives a fixed point via
contraction, which turns out to be unique solution of the system
under considerations. Let $\delta_k \rho:=\rho_{k+1}-\rho_k$,
$\delta_k c:=c_{k+1}-c_k$ and $\delta_k u:=u_{k+1}-u_k$. We then see
that $(\delta_k \rho, \delta_k c, \delta_k u)$ solves
\[
\partial_t \delta_k\rho -\Delta \delta_k\rho=   -\nabla
\cdot\Big( u_k \delta_k\rho+\delta_{k-1} u
\rho_k+\chi(c_k) \nabla  c_{k+1} \delta_k\rho
\]
\begin{equation}\label{KK-Nov28-300}
+\chi(c_k) \nabla\delta_{k} c\rho_k+\delta_{k-1}\chi(\cdot)
\nabla c_{k+1} \rho_k\Big),
\end{equation}
\begin{equation}\label{KK-Nov28-310}
\partial_t \delta_k c -\Delta \delta_k c=-\delta_{k-1} u\nabla c_{k+1}-u_k\nabla \delta_{k}c
-\delta_{k-1}\kappa(\cdot) \rho_{k}-\kappa(c_k) \delta_{k-1}\rho,
\end{equation}
\begin{equation}\label{KK-Nov28-320}
\partial_t \delta_k u -\Delta \delta_k u+\nabla \delta_k p=-\delta_{k-1} u\nabla u_k
- u_{k-1}\nabla \delta_{k-1} u  - \delta_k\rho\nabla\phi,
\end{equation}
where $\delta_{k-1}\chi(\cdot):=\chi(c_k)-\chi(c_{k-1})$ and
$\delta_{k-1}\kappa(\cdot):=\kappa(c_k)-\kappa(c_{k-1})$. Here zero
initial data are given, namely
$\delta_k\rho(\cdot,0)=0$, $\delta_k c(\cdot,0)=0$ and $\delta_k
u(\cdot,0)=0$.

For convenience, we denote
\[
h_1:=u_k \delta_k\rho, \qquad h_2:=\delta_{k-1} u
\rho_k,\qquad h_3:=\chi(c_{k}) \nabla c_{k+1} \delta_k\rho,
\]
\[
h_4:=\chi(c_k) \nabla\delta_{k} c \rho_k,
\qquad h_5:=\delta_{k-1}\chi(\cdot)
\nabla c_{k+1} \rho_k.
\]
Due to representation formula of  the heat equation, we get
\[
\delta_k\rho(x,t)=\int_0^t \bke{\nabla \Gamma * \sum_{i=1}^5 h_i}(s)ds.
\]
Using the estimate of the heat equation, we note that
\[
\norm{\delta_k\rho}_{L^{\infty}_tL^a_x}\le \sum_{i=1}^5\norm{\int_0^t \norm{\nabla \Gamma *  h_i(s)}_{L^a_x}ds}_{L^{\infty}_t}:=\sum_{i=1}^5H_i.
\]
We estimate $H_i$ separately. Indeed, for $H_1$ we obtain
\[
H_1=\norm{\int_0^t \norm{\nabla \Gamma *  h_1(s)}_{L^a_x}ds}_{L^{\infty}_t}\le C\norm{\int_0^t (t-s)^{-\frac{1}{2}}\norm{u_k(s)}_{L^{\infty}_x}\norm{\delta_k\rho}_{L^a_x}ds}_{L^{\infty}_t}
\]
\[
\le C\norm{u_k}_{l^2_tL^{\infty}_x}\norm{\delta_k\rho}_{L^{\infty}_tL^a_x}
\le Ct^{\eta}\norm{u_k}_{Y_a}\norm{\delta_k\rho}_{L^{\infty}_tL^a_x},
\]
where we used \eqref{Feb29-90-a}. Similarly, the second term $H_2$ can be estimated as follows:
\[
H_2=\norm{\int_0^t \norm{\nabla \Gamma *  h_2(s)}_{L^a_x}ds}_{L^{\infty}_t}
\le Ct^{\eta}\norm{\delta_{k-1} u}_{Y_a}\norm{\rho_k}_{L^{\infty}_tL^a_x}.
\]

To compute $H_3$, we introduce $1/q=1/(d+2)+1/a$ and we then obtain
\[
H_3
\le C\norm{\int_0^t (t-s)^{-\frac{d}{2}(\frac{1}{q}-\frac{1}{a})-\frac{1}{2}}\norm{\nabla c_k(s)}_{L^{d+2}_x}\norm{\delta_k\rho}_{L^a_x}ds}_{L^{\infty}_t}
\le C\norm{\nabla c_k}_{L^{d+2}_{x, t}}\norm{\delta_k\rho}_{L^{\infty}_tL^a_x}
\]
\[
\le C\bke{t^{1-\frac{d}{2a}}\norm{u_k}_{Y_a(Q_t)} +t^{\frac{3}{2}\bke{1-\frac{d}{2a}}}\norm{\rho}_{L^{\infty}_tL^{a}_x }
+t^{\frac{1}{2}\bke{1-\frac{d}{2a}}}\bke{\norm{u_0}_{L^{2a}_x }+\norm{\nabla c_0}_{L^{2a}_x }}}\norm{\delta_k\rho}_{L^{\infty}_tL^a_x}
\]
\[
\le C\bke{t^{\eta}+t^{\frac{3\eta}{2}}+t^{\frac{\eta}{2}}}\norm{\delta_k\rho}_{L^{\infty}_tL^a_x}.
\]
The term $H_5$ can be similarly estimated as $H_3$, namely
\[
H_5
\le C\norm{\int_0^t (t-s)^{-\frac{d}{2}(\frac{1}{q}-\frac{1}{a})-\frac{1}{2}}\norm{\delta_{k-1}\chi(\cdot)}_{L^{\infty}}
\norm{\nabla c_k}_{L^{d+2}_x}\norm{\rho_k}_{L^a_x}ds}_{L^{\infty}_t}
\]
\[
\le C\norm{\delta_{k-1}c}_{L^{\infty}}\norm{\nabla c_k}_{L^{d+2}_{x, t}}\norm{\rho_k}_{L^{\infty}_tL^a_x}
\]
\[
\le C\bke{t^{1-\frac{d}{2a}}\norm{u_k}_{Y_a(Q_t)} +t^{\frac{3}{2}\bke{1-\frac{d}{2a}}}\norm{\rho_k}_{L^{\infty}_tL^{a}_x }
+t^{\frac{1}{2}\bke{1-\frac{d}{2a}}}\bke{\norm{u_0}_{L^{2a}_x }+\norm{\nabla c_0}_{L^{2a}_x }}}\norm{\rho_k}_{L^{\infty}_tL^a_x}\norm{\delta_{k-1}c}_{L^{\infty}}
\]
\[
\le Ct^{\eta}\bke{t^{\eta}+t^{\frac{3\eta}{2}}+t^{\frac{\eta}{2}}}\norm{\delta_{k-1}c}_{Y_a}.
\]
Before we estimae $H_4$, we let $\tilde{a}$ with $1/\tilde{a}=1/a-1/d$. Choosing $\alpha$ and $\beta$ with
$\tilde{a}<\alpha<\infty$ and $d/\alpha+2/\beta=1$, we have
\[
\norm{\nabla \delta_k c}_{L^{\beta}_tL^{\alpha}_x}\le
C(\norm{\delta_{k-1} u c_{k+1}}_{L^{\beta}_tL^{\alpha}_x}+\norm{u_k\delta_{k}c}_{L^{\beta}_tL^{\alpha}_x}
+t^{1-\frac{d}{2a}}(\norm{\delta_{k-1}\kappa(\cdot) \rho_{k+1}}_{L^{\infty}_tL^a_x}+\norm{\kappa(c_k) \delta_k\rho}_{L^{\infty}_tL^a_x}))
\]
\[
\le C(\norm{\delta_{k-1} u }_{L^{\beta}_tL^{\alpha}_x}+\norm{u_k}_{L^{\beta}_tL^{\alpha}_x}\norm{\delta_{k}c}_{L^{\infty}_{x,t}}
+t^{1-\frac{d}{2a}}(\norm{\delta_{k-1}c}_{L^{\infty}_{x,t}}\norm{\rho_{k+1}}_{L^{\infty}_tL^a_x}+\norm{\delta_k\rho}_{L^{\infty}_tL^a_x}))
\]
\begin{equation}\label{18Nov-500}
\le Ct^{1-\frac{d}{2a}}(\norm{\delta_{k-1} u }_{Y_a}+\norm{\delta_{k}c}_{Y_a})
+Ct^{\frac{2a-d}{a}}\norm{\delta_{k-1}c}_{Y_a}+Ct^{1-\frac{d}{2a}}\norm{\delta_k\rho}_{L^{\infty}_tL^a_x}).
\end{equation}
Using the estimate \eqref{18Nov-500}, we obtain
\[
H_4
\le C\norm{\int_0^t (t-s)^{-\frac{d}{2\alpha}-\frac{1}{2}} \norm{\nabla\delta_{k} c}_{L^{\alpha}_x}\norm{\rho_k}_{L^a_x}ds}_{L^{\infty}_t}
\le C\norm{\nabla\delta_{k} c}_{L^{\beta}_tL^{\alpha}_x}\norm{\rho_k}_{L^{\infty}_tL^a_x}
\]
\[
\le Ct^{1-\frac{d}{2a}}(\norm{\delta_{k-1} u }_{Y_a}+\norm{\delta_{k}c}_{Y_a})
+Ct^{\frac{2a-d}{a}}\norm{\delta_{k-1}c}_{Y_a}+Ct^{1-\frac{d}{2a}}\norm{\delta_k\rho}_{L^{\infty}_tL^a_x})
\]
\[
\le Ct^{\eta}(\norm{\delta_{k-1} u }_{Y_a}+\norm{\delta_{k}c}_{Y_a})+t^{2\eta}\norm{\delta_{k-1}c}_{Y_a}
+Ct^{\eta_1}\norm{\delta_k\rho}_{L^{\infty}_tL^a_x}).
\]

Therefore, we obtain
\[
\norm{\delta_k\rho}_{X_a}
\lesssim \max(T^{\eta}, T^{\frac{\eta}{2}}, T^{\frac{3\eta}{2}})\norm{\delta_k\rho}_{X_a}
+T^{\eta}\norm{\delta_{k} c}_{Y_a}
\]
\begin{equation}\label{KK-Dec15-10}
+T^{\eta}\norm{\delta_{k-1} u}_{Y_a}+\max(T^{2\eta},
T^{\frac{5\eta}{2}}, T^{\frac{3\eta}{2}\eta})\norm{\delta_{k-1} c}_{Y_a}.
\end{equation}

Next, via maximal regularity of the Stokes system, we estimate $\delta u$ as follows:
\[
\norm{\partial_t \delta_k u}_{L^2_tL^a_x(Q_T)} +\norm{\Delta \delta_k
u}_{L^2_tL^a_x(Q_T)} +\norm{\nabla \delta_k p}_{L^2_tL^a_x(Q_T)}
\]
\[
\lesssim \norm{\delta_{k-1} u\nabla u_k}_{L^2_tL^a_x(Q_T)} +
\norm{u_{k-1}\nabla \delta_{k-1} u}_{L^2_tL^a_x(Q_T)}
+\norm{\delta_k\rho\nabla\phi}_{L^2_tL^a_x(Q_T)}
\]
\[
\lesssim \norm{\delta_{k-1} u}_{L^{\infty}}\norm{\nabla
u_k}_{L^2_tL^a_x(Q_T)}+\norm{u_{k-1}}_{L^{\infty}}\norm{\nabla \delta_{k-1}
u}_{L^2_tL^a_x(Q_T)}
+\norm{\nabla\phi}_{L^{\infty}}\norm{\delta_k\rho}_{L^2_tL^a_x(Q_T)}
\]
\[
\lesssim T^{\eta}\norm{\delta_{k-1}
u}_{Y_a}\norm{ u_{k}}_{Y_a}
+T^{\eta}\norm{\delta_{k-1}
u}_{Y_a}\bke{T^{\eta_1}\norm{ u_{k-1}}_{Y_a}+\norm{u_0}_{L^{\infty}}}+T^{\frac{1}{2}}\norm{\delta_k\rho}_{X_a}
\]
\[
\lesssim \bke{T^{2\eta}\norm{ u_{k-1}}_{Y_a}+T^{\eta}\norm{u_k}_{Y_a}+T^{\eta}\norm{u_0}_{L^{\infty}}}
\norm{\delta_{k-1}u}_{Y_a}+T^{\frac{1}{2}}\norm{\delta_k\rho}_{X_a}.
\]
Therefore,
\begin{equation}\label{KK-Dec15-20}
\norm{\delta_k u}_{Y_a}\lesssim \max(T^{2\eta}, T^{\eta})\norm{\delta_{k-1}
u}_{Y_a}+T^{\frac{1}{2}}\norm{\delta_k\rho}_{X_a}.
\end{equation}

Next, we estimate $\delta_k c$.
\[
\norm{\partial_t \delta_k c}_{L^2_tL^a_x(Q_T)}+\norm{\nabla^2 \delta_k
c}_{L^2_tL^a_x(Q_T)}
\]
\[
 \lesssim \norm{\delta_{k-1} u\nabla c_{k+1}}_{L^2_tL^a_x(Q_T)}+\norm{ u_k\nabla \delta_{k}c}_{L^2_tL^a_x(Q_T)}
 +\norm{\delta_{k-1}\kappa(\cdot) \rho_{k}}_{L^2_tL^a_x(Q_T)}+\norm{\kappa(c_k)
\delta_{k-1}\rho}_{L^2_tL^a_x(Q_T)}
\]
\[
\lesssim \norm{\delta_{k-1} u}_{L^{\infty}}\norm{\nabla
c_{k+1}}_{L^2_tL^a_x(Q_T)}+\norm{ u_k}_{L^{\infty}(Q_T)}\norm{\nabla
\delta_{k}c}_{L^2_tL^a_x(Q_T)}
\]
\[
+\norm{\delta_{k-1}\kappa(\cdot)}_{L^{\infty}}\norm{\rho_{k}}_{L^2_tL^a_x(Q_T)}+\norm{
\delta_{k-1}\rho}_{L^2_tL^a_x(Q_T)}
\]
\[
\lesssim T^{\eta}\norm{\delta_{k-1}
u}_{Y_a}\norm{c_{k+1}}_{Y_a}+T^{\frac{1}{2}}(T^{\eta}\norm{
u_k}_{Y_a}+\norm{
u_0}_{L^{\infty}})\norm{\delta_{k} c}_{Y_a}
\]
\[
+T^{\eta+\frac{1}{2}}\norm{\delta_{k-1}c}_{Y_a}\norm{\rho_{k}}_{X_a}
+T^{\frac{1}{2}}\norm{\delta_{k-1}\rho}_{X_a}
\]
\begin{equation*}
\lesssim T^{\eta}\norm{\delta_{k-1} u}_{Y_a}+\max(T^{\eta+\frac{1}{2}}, T^{\frac{1}{2}})
\norm{\delta_{k}c}_{Y_a}+
T^{\eta+\frac{1}{2}}\norm{\delta_{k-1}c}_{Y_a}
+T^{\frac{1}{2}}\norm{\delta_{k-1}\rho}_{X_a}.
\end{equation*}
Taking $T$ sufficiently small, we obtain
\begin{equation}\label{KK-May26-1000}
\norm{\delta_{k}c}_{Y_a}\lesssim T^{\eta}\norm{\delta_{k-1} u}_{Y_a}+
T^{\eta+\frac{1}{2}}\norm{\delta_{k-1}c}_{Y_a}
+T^{\frac{1}{2}}\norm{\delta_{k-1}\rho}_{X_a}.
\end{equation}

Summing up \eqref{KK-Dec15-10}, \eqref{KK-Dec15-20} and
\eqref{KK-May26-1000} and noting that \eqref{KK-Dec15-10} holds for all $d<a$, we have

\[
\norm{\delta_k\rho}_{X_a}+\norm{\delta_k
u}_{Y_a}+\norm{\delta_{k} c}_{Y_a}
\]
\[
\le C\max(T^{\eta}, T^{\frac{1}{2}})\norm{\delta_{k-1}\rho}_{X_a}
+\max(T^{2\eta}, T^{\eta+\frac{1}{2}}, T^{\eta})\norm{\delta_{k-1}
u}_{Y_a}
\]
\[
+C\max(T^{\eta+\frac{1}{2}}, T^{\eta},
T^{2\eta})\norm{\delta_{k-1} c}_{Y_a}.
\]
Taking $T$ sufficiently small, we obtain
\begin{equation}\label{KK-May27-100}
\norm{\delta_k\rho}_{X_a}+\norm{\delta_k
u}_{Y_a}+\norm{\delta_{k} c}_{Y_a}\le
\frac{1}{2}\bke{\norm{\delta_{k-1}\rho}_{X_a}+\norm{\delta_{k-1}
u}_{Y_a} +\norm{\delta_{k-1} c}_{Y_a}}.
\end{equation}
This yields fixed point via the theory of the contraction mapping.
Once we prove \eqref{KK-May27-300} and \eqref{KK-May27-400}, we
complete the proof. Note that this can be done exactly the same as
we did in the proof of Theorem 1.
\end{pfthm-2}

%%%%%%%%%%%%%%%%%%%%%%%%%%%%%%%%%%%%%%%%%%%%%%%%%%%%%%%%%%%%%%%%%%%%
%%%%%%%%%%%%%%%%%%%%%%%%%%%%%%%%%%%%%%%%%%%%%%%%%%%%%%%%%%%%%%%%%%%%
%%%%%%%%%%%%%%%%%%%%%%%%%%%%%%%%%%%%%%%%%%%%%%%%%%%%%%%%%%%%%%%%%%%%

\section*{Acknowledgements}
Kyungkeun Kang's work is supported by NRF-2019R1A2C1084685 and
NRF-2015R1A5A1009350. Hwa Kil Kim's work is supported by NRF-2021R1F1A1048231 and NRF-2018R1D1A1B07049357.

\begin{equation*}
\left.
\begin{array}{cc}
{\mbox{Kyungkeun Kang}}\qquad&\qquad {\mbox{Hwa Kil Kim}}\\
{\mbox{Department of Mathematics }}\qquad&\qquad
 {\mbox{Department of Mathematics Education}} \\
{\mbox{Yonsei University
}}\qquad&\qquad{\mbox{Hannam University}}\\
{\mbox{Seoul, Republic of Korea}}\qquad&\qquad{\mbox{Daejeon, Republic of Korea}}\\
{\mbox{kkang@yonsei.ac.kr }}\qquad&\qquad {\mbox{hwakil@hnu.kr }}
\end{array}\right.
\end{equation*}

\end{document}